\begin{document}

\newtheorem{theorem}{Theorem}[section]
\newtheorem{result}[theorem]{Result}
\newtheorem{fact}[theorem]{Fact}
\newtheorem{example}[theorem]{Example}
\newtheorem{conjecture}[theorem]{Conjecture}
\newtheorem{lemma}[theorem]{Lemma}
\newtheorem{proposition}[theorem]{Proposition}
\newtheorem{corollary}[theorem]{Corollary}
\newtheorem{facts}[theorem]{Facts}
\newtheorem{props}[theorem]{Properties}
\newtheorem*{thmA}{Theorem A}
\newtheorem{ex}[theorem]{Example}
\theoremstyle{definition}
\newtheorem{definition}[theorem]{Definition}
\newtheorem{remark}[theorem]{Remark}
\newtheorem*{defna}{Definition}

\newcommand{\notes} {\noindent \textbf{Notes.  }}
\newcommand{\note} {\noindent \textbf{Note.  }}
\newcommand{\defn} {\noindent \textbf{Definition.  }}
\newcommand{\defns} {\noindent \textbf{Definitions.  }}
\newcommand{\x}{{\bf x}}
\newcommand{\z}{{\bf z}}
\newcommand{\B}{{\bf b}}
\newcommand{\V}{{\bf v}}
\newcommand{\T}{\mathbb{T}}
\newcommand{\Z}{\mathbb{Z}}
\newcommand{\Hp}{\mathbb{H}}
\newcommand{\D}{\mathbb{D}}
\newcommand{\R}{\mathbb{R}}
\newcommand{\N}{\mathbb{N}}
\renewcommand{\B}{\mathbb{B}}
\newcommand{\C}{\mathbb{C}}
\newcommand{\ft}{\widetilde{f}}
\newcommand{\dt}{{\mathrm{det }\;}}
 \newcommand{\adj}{{\mathrm{adj}\;}}
 \newcommand{\0}{{\bf O}}
 \newcommand{\av}{\arrowvert}
 \newcommand{\zbar}{\overline{z}}
 \newcommand{\xbar}{\overline{X}}
 \newcommand{\htt}{\widetilde{h}}
\newcommand{\ty}{\mathcal{T}}
\renewcommand\Re{\operatorname{Re}}
\renewcommand\Im{\operatorname{Im}}
\newcommand{\tr}{\operatorname{Tr}}

\newcommand{\ds}{\displaystyle}
\numberwithin{equation}{section}

\renewcommand{\theenumi}{(\roman{enumi})}
\renewcommand{\labelenumi}{\theenumi}

\title{Poincar\'{e} linearizers in higher dimensions}
\subjclass[2010]{Primary 37F10; Secondary 30C65, 30D05}

\author{Alastair Fletcher}
\address{Department of Mathematical Sciences, Northern Illinois University,
DeKalb, IL 60115-2888, USA}
\email{fletcher@math.niu.edu}

\begin{abstract}
It is well-known that a holomorphic function near a repelling fixed point may be conjugated to a linear function. The function which conjugates is called a Poincar\'e linearizer and may be extended to a transcendental entire function in the plane. In this paper, we study the dynamics of a higher dimensional generalization of Poincar\'e linearizers. These arise by conjugating a uniformly quasiregular mapping in $\R^m$ near a repelling fixed point to the mapping $x\mapsto 2x$. In particular, we show that the fast escaping set of such a linearizer has a spider's web structure.
\end{abstract}

\maketitle

\section{Introduction}

\subsection{Background}

A central theme of complex dynamics is that of linearization, that is, conjugating a mapping near a fixed point to a simpler mapping. The idea is that it is then easier to see how the mapping behaves near the fixed point. For example, if $p$ is a polynomial in $\C$ with a repelling fixed point $z_0$, i.e. $p(z_0) = z_0$ and $|p'(z_0)| >1$, then there exists an entire function $L$ which satisfies $L(0) = z_0$ and
\[ p(L(z)) = L(p'(z_0)\cdot z),\]
for all $z\in \C$. Hence up to conjugation $p$ behaves like a $\C$-linear mapping near $z_0$, see for example \cite{Milnor}. The function $L$ is called a Poincar\'{e} function or a linearizer of $p$ at $z_0$. The functional equation may be iterated to obtain
\[ p^k(L(z)) = L( p'(z_0)^k \cdot z),\]
for any $k\in \N$. This indicates that the linearizer $L$ depends on the dynamical properties of $p$ as well as $z_0$ and $p'(z_0)$. 

\begin{figure}[h]
\begin{center}
\includegraphics[width=5in]{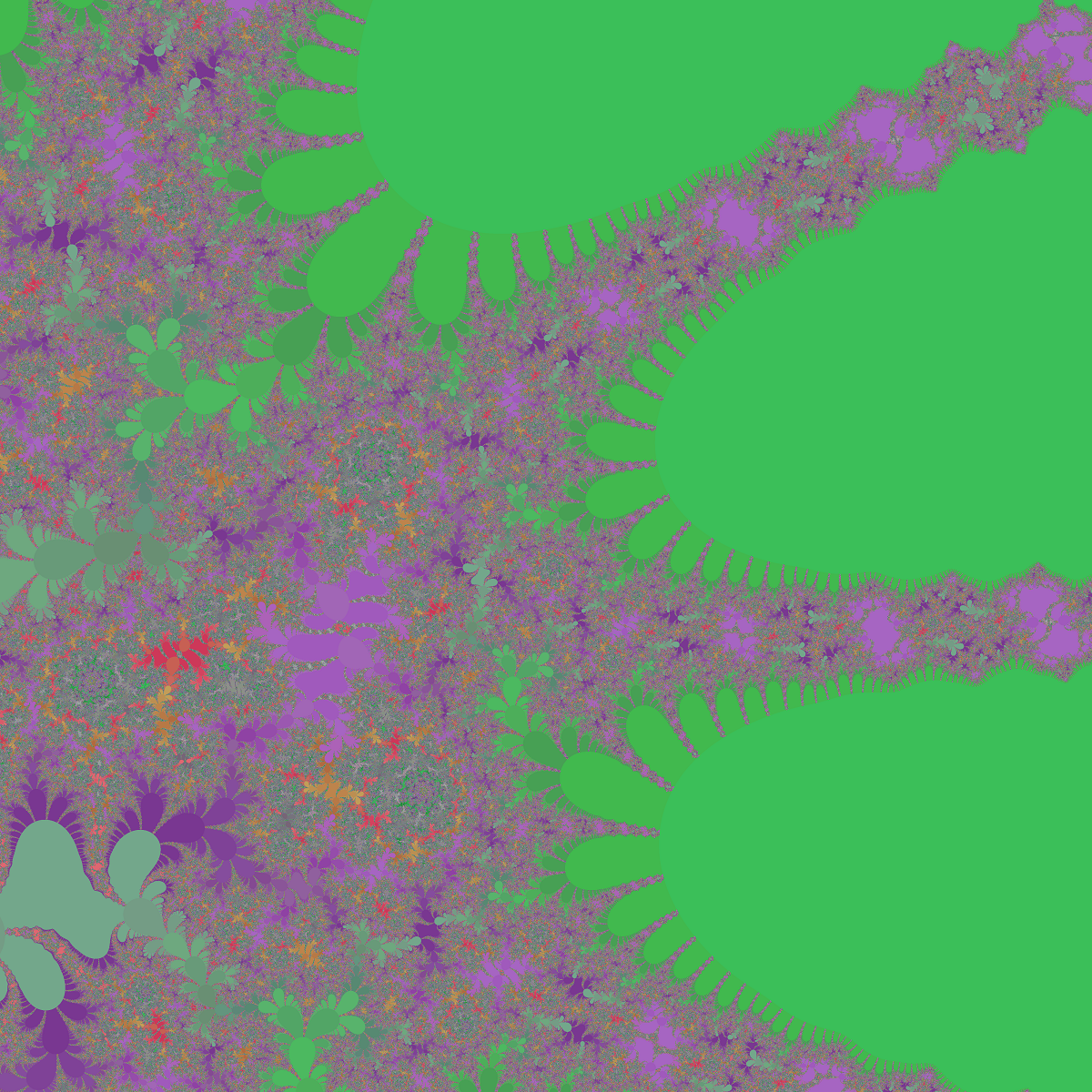}
\caption{The Julia set for the linearizer of $z^2 -0.8+0.157i$ about $z_0 = 1.528 -0.076i$ (to $3$ decimal places). The fast escaping set is a spider's web.}\label{pic1}
\end{center}
\end{figure}

The dynamics of such linearizers, and in particular the fast escaping set, were studied in \cite{MBP} by Mihaljevic-Brandt and Peter. They showed that if $c$ is not in the Mandelbrot set, then a linearizer about a fixed point of $z^2+c$ has a spider's web structure for its fast escaping set, see for example Figure \ref{pic1} which was produced by Doug Macclure.

We briefly recall the notion of the fast escaping set here.
Recall that the escaping set of a holomorphic function is defined by
\[ I(f) = \{ z\in \C : f^n(z) \to \infty \},\]
and was first studied by Eremenko \cite{E} for transcendental entire functions, and the fast escaping set is defined by
\[ A(f) = \{ z \in \C : \exists L \in \N, |f^{n+L}(z)| \geq M^n(R,f), n \in \N\}, \]
where $ M(R,f) = \max \{ |f(x) | : |x|=R \}$ is the maximum modulus and $M^n(R,f)$ denotes the iterated maximum modulus, e.g. $M^2(R,f) = M(M(R,f),f)$.
The fast escaping set was introduced by Bergweiler and Hinkkanen \cite{BH} and has been extensively studied, see for example \cite{MBP,RS,RS2012}.

Quasiregular mappings are a natural higher dimensional analogue of holomorphic functions in the plane, and so the iteration of quasiregular mappings is a natural higher dimensional counterpart to complex dynamics. An important point here is that quasiregular mappings satisfy analogues of Picard's Theorem and Montel's Theorem, both key results for complex dynamics. Rickman's monograph \cite{Rickman} is a foundational reference for quasiregular mappings. Briefly, quasiregular mappings are Sobolev mappings in $W^1_{n,loc}(\R^m)$ with a uniform bound on the distortion.
See \cite{B2} for an overview of the iteration theory of quasiregular mappings, \cite{BFLM,FN} for the escaping set of quasiregular mappings, \cite{B3,BN} for the definition of the Julia set for quasiregular mappings and \cite{BDF} for the fast escaping set of quasiregular mappings of transcendental type. Here, a quasiregular mapping of transcendental type is one for which the limit of $f(x)$ as $|x|\to \infty$ does not exist, in direct analogy with transcendental entire functions.

The aim of this article is to investigate the properties of analogues of Poincar\'{e} linearizers for quasiregular mappings. There is not a surfeit of examples of quasiregular mappings with interesting dynamics, and so an aim of this paper is to bring attention to this class of quasiregular mappings and its dynamics.

\subsection{Construction of linearizers and statement of results}

We will first construct the analogue of Poincar\'{e} functions in the quasiregular setting. Recalling how Poincar\'e linearizers arise from repelling fixed points of polynomials, the natural analogue of polynomials to this situation are uniformly quasiregular mappings (abbreviated to uqr). These are mappings $f:\R^m \to \R^m$ for which there is a uniform bound on the distortion of all the iterates. All currently known examples extend to quasiregular mappings $S^m \to S^m$, where $S^m = \R^m \cup \{ \infty \}$. These were the first quasiregular mappings whose dynamics were studied \cite{IM}.

For such mappings which are not injective, there are direct analogues of the Fatou and Julia sets and $\R^m = F(f) \cup J(f)$. The escaping set $I(f)$ is a connected neighbourhood of infinity \cite{FN} and $J(f)$ is the closure of the periodic points \cite{Siebert}. It is an open question whether $J(f)$ is the closure of the repelling periodic points, but see Theorem \ref{dense}.

Suppose $f:\R^m \to \R^m$ is a uqr mapping of polynomial type and $x_0 \in J(f)$ is a repelling periodic point. In this context, a repelling periodic point $x_0$ is one for which there exist $k\in \N$ and a neighbourhood $U \ni x_0$ such that $f^k(x_0) = x_0$, $f^k$ is injective on $U$ and $ \overline{U} \subset f^k(U)$. An immediate obstacle is that quasiregular mappings need not be differentiable everywhere. However, Hinkkanen, Martin and Mayer \cite{HMM} consider the notion of the generalized derivative (see also \cite{GMRV}). 
Without loss of generality, assume that the fixed point is $x_0=0$. For $\lambda >0$ define $f_\lambda(x) = \lambda f(x/\lambda)$.
Then the set of limit mappings
\[ \mathcal{D} f(0) =\{ \varphi \in \lim _{j\to \infty} f_{\lambda_j}, \text{ where } \lambda_j \to \infty \}\]
is called the infinitesimal space of the uqr map $f$ at $0$, and elements of the infinitesimal space are called generalized derivatives. 

\begin{remark}
If $f$ is differentiable at $x_0$, then $\mathcal{D}f(x_0)$ contains only the linear mapping $x\mapsto f'(x_0)x$.
\end{remark}

\begin{theorem}
\label{linearizer}
Let $f:\R^m \to \R^m$ be a uqr mapping of polynomial type with repelling fixed point $x_0$. Then there exists a quasiregular mapping $L:\R^m \to \R^m$ with $L(0) = x_0$ such that $f\circ L = L \circ T$, where $T(x) = 2x$.
\end{theorem}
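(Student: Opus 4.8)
The plan is to build $L$ in two stages: first a local quasiconformal linearizer near $x_0$, then propagate it over all of $\R^m$ using the functional equation.

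\emph{Stage 0 (local data).} After a translation assume $x_0=0$. By definition of a repelling fixed point there are $k\in\N$ and a neighbourhood $U_0$ on which $f^k$ is injective with $\overline{U_0}\subset f^k(U_0)$; writing $f^k=f\circ f^{k-1}$ shows in turn that $f^{k-1}$ is injective on $U_0$ and that $f$ is injective on the open neighbourhood $f^{k-1}(U_0)$ of $0$. After shrinking one obtains a round ball $U=B(0,r)$ with: $f|_U$ injective, $\overline U\subset f(U)$, the inverse branch $\psi:=(f|_U)^{-1}$ defined on $f(U)\supset\overline U$ with $\psi(0)=0$ and $\psi(\overline U)\Subset U$, and (by the standard local dynamics of uqr maps at repelling points, since $\psi^{k}$ is the contracting inverse branch of $f^k$) $\operatorname{diam}\psi^n(\overline U)\to 0$. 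Crucially, because $f$ is uniformly quasiregular, every iterate $f^n$, hence every branch $\psi^n$, has distortion at most a fixed constant $K_0=K_0(f)$; this uniformity is what will make $L$ quasiregular and is the one place the uqr (rather than merely qr) hypothesis is used.

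\emph{Stage 1 (local linearizer).} Fix $\rho_1>0$ and let $A=\{\rho_1\le|y|\le 2\rho_1\}$, a fundamental domain for $T(x)=2x$, and $\mathcal{A}=\overline U\setminus\psi(U)$, a fundamental domain for $f$ near $0$: its inner boundary $\partial\psi(U)$ is carried by $f$ homeomorphically onto the outer boundary $\partial U$. I first build a quasiconformal homeomorphism $L_0:A\to\mathcal A$ with $L_0(2\rho_1\theta)=r\theta$ on the outer sphere and $L_0(\rho_1\theta)=\psi(r\theta)$ on the inner sphere, so that $L_0(2y)=f(L_0(y))$ for $|y|=\rho_1$. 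The prescribed boundary map (a conformal scaling outside, a restriction of $\psi$ inside) is quasiconformal with distortion controlled by $K_0$, and it extends across $A$ because $\mathcal A$ is quasiconformally a round annulus: map the quasiconformal sphere $\partial\psi(U)$ to a round sphere by the quasiconformal Schoenflies theorem, straighten the outer boundary by a further quasiconformal map supported away from the inner one, and extend the resulting boundary map of a genuine round annulus by a radial--logarithmic interpolation. Now propagate inward: for $2^{-n}\rho_1\le|y|\le 2^{-n+1}\rho_1$ put $L_0(y):=\psi^n(L_0(2^n y))$. The identity $L_0(2w)=f(L_0(w))$ on $|w|=\rho_1$ makes consecutive pieces agree on the overlapping spheres; each piece equals $\psi^n\circ(L_0|_A)\circ(2^n\,\cdot)$ and so has distortion $\le K_0\cdot K(L_0|_A)$; and $L_0(y)\in\psi^n(\overline U)\to\{0\}$ forces $L_0(y)\to 0$. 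Since an isolated point is removable for bounded quasiconformal maps, $L_0$ extends to a quasiconformal $L_0:B(0,2\rho_1)\to\R^m$ with $L_0(0)=0$, a fixed distortion bound, and $f\circ L_0=L_0\circ T$ on $B(0,\rho_1)$.

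\emph{Stage 2 (globalization).} For $y\in\R^m$ pick $n$ with $|y|/2^n<2\rho_1$ and set $L(y):=f^n(L_0(y/2^n))$. The local equation $f\circ L_0=L_0\circ T$ makes this independent of the choice of (large) $n$, so $L:\R^m\to\R^m$ is well defined; locally $L=f^n\circ L_0\circ(2^{-n}\,\cdot)$, a composition of quasiregular maps whose distortions are bounded by $K_0$, $K(L_0)$ and $1$, so $L$ is quasiregular with distortion $\le K_0\,K(L_0)$. Finally $L(0)=f^n(L_0(0))=f^n(0)=0$, and using the iterate $n+1$ for $T(y)=2y$ gives $L(2y)=f^{n+1}(L_0(y/2^n))=f(L(y))$, i.e. $f\circ L=L\circ T$.

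I expect the main obstacle to be the quasiconformal matching of the two fundamental domains in Stage 1 --- equivalently, the assertion that a uqr map near a repelling fixed point is quasiconformally conjugate to $x\mapsto 2x$ near the origin --- which relies on quasiconformal Schoenflies and annulus-extension results together with the uniform distortion bound for the branches $\psi^n$. Everything downstream of that seed, including the check that $L$ is globally quasiregular, is formal.
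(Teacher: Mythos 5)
Your route is genuinely different from the paper's. The paper's proof is two lines of citations: Theorem 6.3(ii) of Hinkkanen–Martin–Mayer \cite{HMM} supplies a quasiregular $\Psi:\R^m\to\R^m$ and a loxodromic generalized derivative $\varphi$ with $f\circ\Psi=\Psi\circ\varphi$, and Theorem 1.2 of Hinkkanen–Martin \cite{HM} gives a quasiconformal $g$ with $\varphi\circ g = g\circ T$; set $L=\Psi\circ g$. You instead build $L$ from scratch by constructing a local quasiconformal conjugacy on a fundamental annulus and then pushing it out by $f^n$. Your Stage~2 (globalization via $L(y)=f^n(L_0(y/2^n))$, with uniform distortion from the uqr hypothesis) is essentially a re-derivation of the construction of $\Psi$ in \cite{HMM}, and your Stage~1 (the quasiconformal matching of fundamental annuli) replays the content of \cite{HM}. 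What your version buys is self-containment and a concrete picture of $L$; what it costs is that you must now supply proofs of both cited results, and two points deserve flagging.

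First, Stage~0 claims a \emph{round} ball $U=B(0,r)$ with $\psi(\overline U)\Subset U$. This can fail: for a non-normal loxodromic map (the linear model $A$ with all eigenvalues inside the unit disc but $\|A\|>1$ is the toy case), there need be no round ball that $\psi$ maps compactly into itself even though $\psi^k$ does. The definition of repelling fixed point only hands you $\overline{U_0}\subset f^k(U_0)$ for some $k$; extracting a topological ball on which $f$ itself, rather than $f^k$, sends the closure over itself requires an additional argument — in \cite{HMM} this is exactly where the generalized-derivative machinery and the fact that loxodromic repelling is a property of the whole infinitesimal space (their Lemma 4.4) are used. You can repair this by working with a topological quasiball rather than a round ball, but the repair is not automatic and you should not present it as a matter of ``shrinking''. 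Second, the quasiconformal Schoenflies and annulus-extension steps in Stage~1 are genuine theorems in dimensions $m\geq 3$ (Tukia–Väisälä and related work, with well-known subtleties in $m=4$); you identify this as the ``main obstacle'', and indeed this is where the real content lies. So the architecture of your proof is sound and interestingly more explicit than the paper's, but as written the proof is incomplete at precisely the two places the paper outsources to \cite{HMM} and \cite{HM}.
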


\begin{proof}
This theorem essentially combines two well-known results.
By \cite[Theorem 6.3 (ii)]{HMM} there is a quasiregular mapping $\Psi : \R^m \to \R^m$ and a generalized derivative $\varphi$ such that $f\circ \Psi = \Psi \circ \varphi$. Here, $\varphi$ is a loxodromic uniformly quasiconformal map which fixes $0$ and infinity, and $0$ is the repelling fixed point.
Then by \cite[Theorem 1.2]{HM}, $\varphi$ is quasiconformally equivalent to $T(x) = 2x$, i.e. there exists a quasiconformal mapping $g:\R^m \to \R^m$ such that $\varphi \circ g = g\circ T$. Hence 
\[ f\circ \Psi = \Psi \circ g \circ T \circ g^{-1}\]
and we may take $L = \Psi \circ g$.
\end{proof}

This mapping $L$ is called a Poincar\'{e} function or linearizer of $f$ at $x_0$, and is the mapping we will study.

\begin{remark}
Unlike the holomorphic case, a Poincar\'{e} function of a uqr mapping is not specified by the mapping and the repelling fixed point, since the infinitesimal space $\mathcal{D}f(x_0)$ may contain more than one mapping. We observe that by \cite[Lemma 4.4]{HMM} if one generalized derivative in an infinitesimal space is loxodromic repelling, then they all are.
\end{remark}

\begin{remark}
We may have chosen $T(x) = \lambda x$ for any $\lambda >1$. This will change the mapping $L$, but not its dynamical properties. Recall that $p'(z_0)$ is a fixed complex number, whereas in this quasiregular situation we are free to choose the multiplicative factor in the map we conjugate $f$ to about $x_0$.
\end{remark}

The first main result concerns the order of growth of such a linearizer.
The order of growth of a quasiregular mapping $f:\R^m \to \R^m$ is defined by
\[ \rho_f =  \limsup_{r \to \infty} (m-1) \frac{  \log \log M(r,f) }{\log r},\]
and the lower order is
\[ \lambda_f =  \liminf_{r \to \infty} (m-1) \frac{  \log \log M(r,f) }{\log r}.\]
Recall that the order of a linearizer of a polynomial $p$ of degree $d$ about $x_0$ is given by $\log d / \log |p'(x_0)|$, see \cite{Valiron}.

\begin{theorem}
\label{growth}
Let $f:\R^m \to \R^m$ be a $K$-uqr mapping of degree $d>K$ with repelling fixed point $x_0$ and let $L$ be a linearizer of $f$ about $x_0$ conjugating $f$ to $T$.
Then the order $\rho_L$ of $L$ satisfies
\[ \frac{ \log d - \log K }{ \log 2} \leq \rho_L \leq \frac{ \log d + \log K}{ \log 2}.\]
The same holds for the lower order.
\end{theorem}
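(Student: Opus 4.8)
The plan is to read off the growth of $L$ from the iterated functional equation $f^n \circ L = L \circ T^n$, i.e. $|L(2^n x)| = |f^n(L(x))|$, combined with the standard polynomial‑type growth estimates for $f$ and the maximum principle for quasiregular mappings. The point is that the conjugating factor $2$ turns a multiplicative relation between $M(r,L)$ at radii differing by a factor of $2$ into the doubly logarithmic quantity defining $\rho_L$.

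First I would record the one‑step estimate. Since $f$ is $K$‑quasiregular of polynomial type with topological degree $d$ (so $f^{-1}(\infty)=\{\infty\}$), the modulus‑of‑rings estimates for quasiregular maps provide constants $0<c\le C$ and $R_0>0$ with
\[ c\,|y|^{(d/K)^{1/(m-1)}} \;\le\; |f(y)| \;\le\; C\,|y|^{(dK)^{1/(m-1)}}, \qquad |y|\ge R_0 .\]
Write $\alpha=(d/K)^{1/(m-1)}$ and $\beta=(dK)^{1/(m-1)}$; note $\alpha>1$ since $d>K$. Put $M(r)=M(r,L)$. For $r$ large enough that $M(r)\ge R_0$ and the multiplicative constants can be absorbed (possible because $\alpha,\beta>1$), choose $x^\ast$ with $|x^\ast|=r$ and $|L(x^\ast)|=M(r)$, which exists by the maximum principle for quasiregular mappings, and apply the lower bound with $y=L(x^\ast)$ to get $M(2r)\ge|L(2x^\ast)|=|f(L(x^\ast))|\ge M(r)^{\alpha}$. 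For the upper bound, every $x$ with $|x|=r$ satisfies $|L(x)|\le M(r)$, hence $|L(2x)|=|f(L(x))|\le M(r)^{\beta}$, so $M(2r)\le M(r)^{\beta}$. In particular $M(r)\to\infty$ faster than any power, so $L$ is of transcendental type and $\rho_L,\lambda_L$ are the relevant invariants.

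Next I would iterate. For $R$ large and $n\in\N$ the two one‑step inequalities give
\[ M(R)^{\alpha^n} \;\le\; M(2^n R) \;\le\; M(R)^{\beta^n}.\]
Taking logarithms twice and using $\log(2^n R)=n\log 2+\log R$,
\[ (m-1)\,\frac{\log\log M(2^n R)}{\log(2^n R)} \;\longrightarrow\; (m-1)\,\frac{\log\alpha}{\log 2}=\frac{\log d-\log K}{\log 2}\]
from below, and, in the same way, this quotient is eventually bounded above by a quantity tending to $(m-1)\frac{\log\beta}{\log 2}=\frac{\log d+\log K}{\log 2}$. Since $M$ is nondecreasing, for a general $r\in[2^n R,2^{n+1}R)$ one sandwiches $M(2^n R)\le M(r)\le M(2^{n+1}R)$ together with $n\log2+\log R\le \log r<(n+1)\log2+\log R$; letting $r\to\infty$ this promotes the sequential estimates to genuine $\liminf$ and $\limsup$ statements, so that
\[ \frac{\log d-\log K}{\log 2} \;\le\; \lambda_L \;\le\; \rho_L \;\le\; \frac{\log d+\log K}{\log 2},\]
which is the claim for both the order and the lower order.

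The one genuinely delicate ingredient is the one‑step growth estimate and, specifically, the dependence of its exponents on $K$: one must apply the polynomial‑type growth bound with the single fixed distortion constant $K$ of the uqr family and check that iterating it $n$ times legitimately yields the exponents $\alpha^n$ and $\beta^n$ (absorbing the accumulated multiplicative errors, which is where $\alpha,\beta>1$ is used), rather than some weaker bound. Everything else is bookkeeping with the functional equation, the monotonicity of $M$, and the maximum principle.
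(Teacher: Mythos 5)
Your proof is correct and follows essentially the same route as the paper's: both use the H\"older-type growth estimate $c\,|y|^{(d/K)^{1/(m-1)}} \leq |f(y)| \leq C\,|y|^{(dK)^{1/(m-1)}}$ near infinity together with the iterated functional equation $f^n\circ L = L\circ T^n$ to bound $\log\log M(2^nR,L)$ linearly in $n$, then take a double logarithm. The only cosmetic difference is that you iterate a one-step bound and absorb the constants at the end (noting $\alpha,\beta>1$), while the paper invokes an already-iterated estimate (its Lemma~\ref{holderinf}) with the accumulated constants written out via $q_j$; the content and the final computation are the same.
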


\begin{remark}
In view of the fact we may have chosen $T(x) = \lambda x$ for any $\lambda >1$, Theorem \ref{growth} implies that we can construct linearizers of arbitrarily large or arbitrarily small positive order at a repelling fixed point of a uqr mapping.
\end{remark}

We say that $y\in\R^m$ is an omitted value of $L$ if there is no $x\in \R^m$ such that $L(x)=y$. By Rickman's Theorem \cite{Rickman}, $L$ can only omit finitely many values. We write $\mathcal{O}(L)$ for the set of omitted values of $L$. A point $x\in \R^m$ is called an exceptional value for a uqr mapping $f:\R^m \to \R^m$ if the backwards orbit $O^-(x) = \cup_{n\geq 1} f^{-n}(x)$ under $f$ is finite. We write $\mathcal{E}(f)$ for the set of exceptional values of $f$.
 If $f\circ L = L\circ T$, we relate the omitted values of $L$ to the exceptional values of $f$, compare \cite[Proposition 4.1]{MBP}.

\begin{theorem}
\label{omitted}
With the hypotheses of Theorem \ref{growth}, if $L$ is a linearizer of $f$ at $x_0$, we have $\mathcal{O}(L) = \mathcal{E}(f) \setminus \{ x_0 \}$.
\end{theorem}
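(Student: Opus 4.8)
The plan is to prove the two inclusions $\mathcal{O}(L) \subseteq \mathcal{E}(f) \setminus \{x_0\}$ and $\mathcal{E}(f) \setminus \{x_0\} \subseteq \mathcal{O}(L)$ separately, using only the iterated functional equation $f^n \circ L = L \circ T^n$, i.e. $f^n(L(x)) = L(2^n x)$ for all $n \in \N$, together with the elementary reformulation that $\mathcal{O}(L)$ is exactly the complement $\R^m \setminus L(\R^m)$ of the image of $L$. Note first that $L$ is non-constant, since by Theorem \ref{growth} its order $\rho_L$ is strictly positive (as $d > K$); hence Rickman's Theorem applies and $\mathcal{O}(L)$ is finite.

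For the first inclusion, set $\Omega = \R^m \setminus L(\R^m) = \mathcal{O}(L)$. I would first show that $\Omega$ is backward invariant under $f$, i.e. $f^{-1}(\Omega) \subseteq \Omega$. Indeed, if $z = L(w)$ for some $w \in \R^m$, then $f(z) = f(L(w)) = L(2w) \in L(\R^m)$; thus $f$ maps $L(\R^m)$ into itself, so any $f$-preimage of a point outside $L(\R^m)$ must itself lie outside $L(\R^m)$. Iterating, $f^{-n}(\Omega) \subseteq \Omega$ for all $n \in \N$, and therefore for $y \in \mathcal{O}(L) = \Omega$ we have $O^-(y) = \bigcup_{n \geq 1} f^{-n}(y) \subseteq \Omega$, which is finite by Rickman's Theorem. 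Hence $y \in \mathcal{E}(f)$, and $y \neq x_0$ because $x_0 = L(0) \in L(\R^m)$.

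For the reverse inclusion, take $y \in \mathcal{E}(f)$ with $y \neq x_0$ and suppose, for contradiction, that $y = L(x)$ for some $x \in \R^m$. Then $f^n(L(x/2^n)) = L(x) = y$, so $L(x/2^n) \in f^{-n}(y) \subseteq O^-(y)$ for every $n \geq 1$. Since $y$ is exceptional, $O^-(y)$ is finite, so some point $p$ is attained by the sequence $(L(x/2^n))_{n \geq 1}$ for infinitely many indices $n_j$; letting $j \to \infty$ and using continuity of $L$ at $0$ gives $p = \lim_j L(x/2^{n_j}) = L(0) = x_0$. But then $y = f^{n_j}(L(x/2^{n_j})) = f^{n_j}(x_0) = x_0$, contradicting $y \neq x_0$. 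Hence $y \notin L(\R^m)$, that is, $y \in \mathcal{O}(L)$.

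The argument is essentially formal and I do not anticipate a serious obstacle. The one point needing care is the limiting step in the second inclusion, where one must exclude the possibility that the backward-orbit points $L(x/2^n)$ stabilise at a value other than $x_0$; this is precisely where continuity of $L$ at the origin, guaranteed since $L$ is quasiregular, is used. The only external input is Rickman's Theorem on the finiteness of the set of omitted values, applicable because $L$ is non-constant as noted above; this mirrors the corresponding statement \cite[Proposition 4.1]{MBP} in the holomorphic setting.
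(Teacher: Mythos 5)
Your proof is correct. The first inclusion is logically the same as the paper's argument, merely reorganized: the paper argues by contraposition (if $O^-(y)$ is infinite, then since $\mathcal{O}(L)$ is finite some point of $O^-(y)$ lies in $L(\R^m)$, and the functional equation pushes it forward to show $y \in L(\R^m)$), whereas you extract the underlying structural fact — that $L(\R^m)$ is forward invariant under $f$, hence $\mathcal{O}(L)$ is backward invariant — which makes the argument read more cleanly but uses exactly the same ingredients (the functional equation, Rickman's theorem).

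The second inclusion is where you genuinely diverge. The paper takes $y = L(x)$ with $x \neq 0$, observes that $L$ is injective on a neighbourhood of $0$, and notes that the points $L(x/2^k)$ are therefore eventually pairwise distinct, so $O^-(y)$ is infinite. You instead avoid local injectivity entirely: you use finiteness of $O^-(y)$ to produce a value $p$ hit infinitely often by $L(x/2^{n_j})$, identify $p = x_0$ by continuity of $L$ at $0$, and then use that $x_0$ is a fixed point of $f$ to deduce $y = x_0$, a contradiction. Both are short, but your route replaces the local injectivity of $L$ near $0$ (a fact about the construction of the linearizer that the paper invokes without re-derivation here) with the more basic properties of continuity and the fixed-point equation $f(x_0) = x_0$, so in that sense it is marginally more self-contained. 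The trade-off is that your argument is by contradiction and a little less direct: the paper's proof positively exhibits infinitely many distinct preimages, while yours shows that finiteness forces $y = x_0$. Both establish the theorem.
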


For a quasiregular mapping $f:\R^m \to \R^m$ of transcendental type, the Julia set of $f$ is defined in \cite{BN} to be
\[ J(f) = \{ x\in \R^m : \operatorname{cap} \left ( \R^m \setminus O_f^+(U) \right ) = 0, \text{ for all open sets } U \ni x\},\]
where $O_f^+(U)$ denotes the forward orbit of $U$ under $f$.
We refer to \cite{BN} for the technical definition of capacity. The Julia set of a Poincar\'e linearizer has many properties in analogy with the Julia set of a transcendental entire function.

\begin{theorem}
\label{julialin}
With the hypotheses of Theorem \ref{growth},
we have
\begin{enumerate}[(i)]
\item $J(L) \subset \overline{O_L^-(x)}$ for all $x\in \R^m \setminus \mathcal{E}(L)$,
\item $J(L) \subset \overline{O_L^-(x)}$ for all $x\in J(L) \setminus \mathcal{E}(L)$,
\item $\R^m \setminus O_L^+(U) \subset \mathcal{E}(L)$ for every open set $U$ intersecting $J(L)$,
\item $J(L)$ is perfect,
\item $J(L^k) = J(L)$ for all $k\in \N$.
\item $J(L) = \partial A(L)$, recalling $A(L)$ is the fast escaping set.
\end{enumerate}
\end{theorem}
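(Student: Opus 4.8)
The plan is to show that $L$ falls under the general Fatou--Julia theory for quasiregular maps of transcendental type and then read off (i)--(v) from that theory, with (vi) being a statement about the fast escaping set that I would quote from \cite{BDF}. First I would record that $L$ is of transcendental type: by Theorem \ref{growth} the lower order satisfies $\lambda_L \geq (\log d - \log K)/\log 2 > 0$ because $d > K$, so $\log\log M(r,L)/\log r$ does not tend to $0$ and $L$ cannot be of polynomial type; since a non-constant quasiregular map $\R^m \to \R^m$ is either of polynomial or of transcendental type, $L$ is of transcendental type and the Julia set $J(L)$ in the sense of \cite{BN} is available.

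The heart of the matter is (iii), the blowing-up property, together with the classical statement (i) that $J(L) = \overline{O_L^-(x)}$ off a finite set; I would treat these together. Given an open set $U$ meeting $J(L)$, the set $O_L^+(U) = \bigcup_{n \geq 1} L^n(U)$ is forward invariant under $L$, so its complement $E := \R^m \setminus O_L^+(U)$ satisfies $L^{-1}(E) \subseteq E$, and hence $O_L^-(y) \subseteq E$ for every $y \in E$. On the other hand every iterate maps $U$ into $\R^m \setminus E$, so if $E$ had at least $q(m,K)$ points, the constant from Rickman's theorem \cite{Rickman}, the family $\{L^n|_U\}$ would be normal on $U$, contradicting $U \cap J(L) \neq \emptyset$; thus $E$ is finite. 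Combined with $O_L^-(y) \subseteq E$ this shows each $y \in E$ has finite backward orbit, that is, $E \subseteq \mathcal{E}(L)$, which is (iii). (Alternatively one can simply quote the corresponding statement from \cite{BN}, where the capacity-based definition of $J(L)$ is used to run essentially the same argument.) Then (i) follows: for $x \in \R^m \setminus \mathcal{E}(L)$ and any $y \in J(L)$, every neighbourhood $U$ of $y$ meets $J(L)$, so by (iii) $x \in O_L^+(U)$, i.e. $U$ contains a point of $O_L^-(x)$; letting $U$ shrink to $y$ gives $y \in \overline{O_L^-(x)}$. For (ii) one additionally uses that $J(L)$ is backward invariant \cite{BN}: if $x \in J(L)$ then $O_L^-(x) \subseteq J(L)$, so (i) yields $J(L) = \overline{O_L^-(x)}$.

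For the remaining parts: (iv) $J(L)$ is closed, since its complement is open directly from the definition, and infinite for a transcendental-type map \cite{BN}; choosing $y_0 \in J(L) \setminus \mathcal{E}(L)$, which is non-empty as $J(L)$ is infinite and $\mathcal{E}(L)$ is finite, its backward orbit is an infinite subset of the closed set $J(L)$ and so has an accumulation point there, so $J(L)$ has a non-isolated point; since quasiregular maps are open and discrete, a short argument shows that preimages of non-isolated points of $J(L)$ are again non-isolated points of $J(L)$, and by (i) these are dense in $J(L)$, which forces $J(L)$ to be perfect. Part (v): $J(L^k) \subseteq J(L)$ is immediate from the definition since $O_{L^k}^+(U) \subseteq O_L^+(U)$, and the reverse inclusion follows together with the identity $\mathcal{E}(L^k) = \mathcal{E}(L)$, valid because preimages of non-exceptional points under a transcendental-type map are infinite, as in \cite{BN}. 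Finally (vi), the identity $J(L) = \partial A(L)$ between the Julia set and the fast escaping set, holds for all quasiregular maps of transcendental type by \cite{BDF} and is therefore inherited by $L$.

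The main obstacle is making (iii) rigorous within the capacity-based framework of \cite{BN}: passing from ``the avoided set $E$ has zero capacity'', which is all the definition of $J(L)$ gives directly, to ``$E$ is finite'' requires the quasiregular Montel/Rickman machinery and some care, and one must also confirm that the structural results of \cite{BN} and \cite{BDF} apply to a linearizer $L : \R^m \to \R^m$ with no extra restriction beyond $m \geq 2$. Once (iii), equivalently (i), is secured, the other items are routine, with (vi) being a direct appeal to the fast-escaping-set theory.
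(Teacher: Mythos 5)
Your proposal misses the central technical point of the paper's proof. You try to prove (i)--(v) directly by arguing that if $E := \R^m \setminus O_L^+(U)$ had at least $q(m,K)$ points then $\{L^n|_U\}$ would be normal, contradicting $U \cap J(L) \neq \emptyset$. This argument does not go through here: $L$ is quasiregular but not \emph{uniformly} quasiregular, so the dilatations of the iterates $L^n$ are unbounded and the quasiregular Montel/Rickman theorems do not apply to the family $\{L^n\}$. Moreover, in the \cite{BN} framework the Julia set of a non-uqr transcendental-type map is defined purely via capacity, not normality, and ``$\operatorname{cap} E = 0$'' does not by itself yield that $E$ is finite. You correctly flag this as ``the main obstacle,'' but you then leave it unresolved rather than supplying the idea that closes it.

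What the paper actually does is verify that $L$ does not have the \emph{pits effect}, and then invoke \cite[Theorem 1.8]{BN}, which states exactly that for transcendental-type quasiregular maps without the pits effect, properties (i)--(v) hold. The proof that $L$ lacks the pits effect (Lemma \ref{julialemma}) is where the genuine content lies: one uses the uniform perfectness of $J(f)$ (Theorem \ref{unifperf}) together with the functional equation $f^j \circ L = L \circ T^j$ to exhibit, in every annulus $A(D^{k-1}r, D^k r)$, a point $x_k$ with $|L(x_k)|$ uniformly bounded --- specifically a preimage under $L|_U$ of a point of $J(f)$ rescaled by $T^j$. Your outline contains none of this, so (i)--(v) are not actually established. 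For (vi) you also cite the wrong source and hypothesis: the paper appeals to \cite{BFN}, which requires \emph{positive lower order} (supplied by Theorem \ref{growth}), not \cite{BDF}, and the statement does not hold for all transcendental-type quasiregular maps as you assert. The parts of your argument that derive (i), (ii), (iv), (v) once (iii) is secured are broadly in the right spirit, but they are redundant given the reduction to \cite{BN}, and they cannot substitute for the missing pits-effect verification.
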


We next consider specific uqr mappings and their linearizers. Suppose that
\begin{equation}
\label{tame}
J(f) \text{ is a tame Cantor set.}
\end{equation}
Here, a tame Cantor set $E$ is one such that there is a homeomorphism $\psi : \R^m \to \R^m$ with $\psi(E)$ equal to a standard one thirds Cantor set contained in a line. Cantor sets which are not tame are called wild, for example Antoine's necklace. Every Cantor set in the plane is tame. See \cite{HR} for wild Cantor sets in the context of quasiregular mappings.

\begin{remark}
Condition \eqref{tame} may appear a restrictive condition, but there are plenty of quasiregular mappings satisfying it. 
In \cite{MP}, Martin and Peltonen show that every quasiregular mapping $h:S^m \to S^m$ can be decomposed as $h=f\circ \varphi$, where $\varphi$ is a quasiconformal map and $f$ is a uqr mapping for which $J(f)$ is a quasiconformally tame Cantor set. 
\end{remark}

Before we discuss dynamical properties of $L$, we first state the following result, which extends the known cases for when the Julia set of a uniformly quasiregular mapping is the closure of the repelling periodic points, compare with \cite{Siebert}.

\begin{theorem}
\label{dense}
Let $f:\R^m \to \R^m$ be a uniformly quasiregular mapping of polynomial type and suppose that $J(f)$ is a tame Cantor set. Then the repelling periodic points are dense in $J(f)$.
\end{theorem}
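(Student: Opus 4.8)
The plan is to encode the action of $f$ on $J(f)$ by a subshift of finite type and to realise repelling periodic points as the points having periodic itineraries. The tameness hypothesis is needed twice: once to build a geometrically well-behaved Markov partition, and once to guarantee that the branch set $B_f$ does not meet $J(f)$, so that the periodic points produced are genuinely repelling in the topological sense of the definition above (that $f^k$ is injective on a neighbourhood $U$ of the point, with $\overline U \subseteq f^k(U)$).

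First I would build a Markov partition. Since $J(f)$ is a tame Cantor set, after passing to the standard model by the taming homeomorphism and pulling back it may be written as a finite disjoint union $J(f) = E_1 \sqcup \dots \sqcup E_N$ of relatively clopen pieces of arbitrarily small diameter, each of the form $E_i = \overline{U_i} \cap J(f)$ for a topological ball $U_i \subseteq \R^m$ with $\partial U_i \cap J(f) = \emptyset$ and with $\overline{U_1}, \dots, \overline{U_N}$ pairwise disjoint. Refining this cover by taking preimages under $f$, and using complete invariance $f^{-1}(J(f)) = J(f)$ and openness of $f$, one arranges that $f(E_i)$ is a union of the $E_j$ and, after shrinking, that $f(U_i)$ contains a chosen neighbourhood of each such $E_j$. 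The associated transition graph ($i \to j$ whenever $E_j \subseteq f(E_i)$) is irreducible, by the blowing-up property of Julia sets of polynomial-type uqr maps: the forward orbit of any open set meeting $J(f)$ covers $J(f)$, which forces some iterate of $f$ to map $E_i$ over $E_j$ for every pair $i,j$.

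The technical core is to prove that $f$ is uniformly expanding on $J(f)$, equivalently that the diameters of the cylinders $C(i_0 i_1 \cdots i_n) = \bigcap_{l=0}^{n} f^{-l}(E_{i_l})$ tend to $0$ uniformly as $n \to \infty$. I would argue by contradiction via a normal families argument: choosing the pieces $U_i$ small enough that $\R^m \setminus (U_1 \cup \dots \cup U_N)$ has positive conformal capacity, the relevant inverse branches of the iterates $f^n$ (which exist and are injective on each $U_i$ precisely because $B_f \cap J(f) = \emptyset$, and which are $K$-quasiregular since $f$ is uqr) form a normal family by the quasiregular version of Montel's theorem; if some cylinder diameters failed to shrink, extracting a limit would produce a non-constant quasiregular map whose image is an open set on which all iterates of $f$ remain in the fixed compact set $\overline{U_1} \cup \dots \cup \overline{U_N}$, forcing a point of $J(f)$ into the Fatou set, a contradiction. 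I expect this expansion statement, together with the accompanying verification that the tameness of $J(f)$ prevents the branch set from meeting $J(f)$, to be the main obstacle.

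Granting this, a periodic admissible word $w = (i_0 i_1 \cdots i_{p-1})^\infty$ determines a nested sequence of cylinders with a single common point $z_w \in E_{i_0} \subseteq J(f)$, and $f^p(z_w) = z_w$ by periodicity of $w$; the corresponding inverse branch of $f^p$ is injective on $U_{i_0}$ and carries $\overline{U_{i_0}}$ into a proper subset of $U_{i_0}$, so $z_w$ is a repelling periodic point. For density, given $x_0 \in J(f)$ and $\varepsilon > 0$, the expansion step puts $x_0$ in some cylinder $C(i_0 \cdots i_n)$ of diameter less than $\varepsilon$; by irreducibility of the transition graph the finite word $i_0 \cdots i_n$ extends to a periodic admissible word $w$, and the repelling periodic point $z_w$ then lies in $C(i_0 \cdots i_n)$, so $|z_w - x_0| < \varepsilon$. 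Hence the repelling periodic points are dense in $J(f)$.
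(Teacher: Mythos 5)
Your approach is genuinely different from the paper's and considerably more ambitious. The paper's proof is much shorter: it cites Siebert's thesis for the fact that periodic points are dense in the Julia set of any uniformly quasiregular mapping, and then shows that, under the tame Cantor hypothesis, every periodic point is automatically repelling. Tameness is used only to surround a periodic point $x_0$ by arbitrarily small topologically convex neighbourhoods $U_\delta$ with $\partial U_\delta \subset I(f)$; local injectivity of $F = f^p$ near $x_0$ is obtained not from tameness but from the H\"older estimate (Theorem~\ref{rick2}): if $x_0$ were a branch point of $F$, the local index would force the H\"older exponent above $1$, making $x_0$ attracting, which is incompatible with escaping points accumulating at $x_0$. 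Then iterating $F$ on $U_{\delta_n}$ long enough to engulf $\overline{U_{\delta_n}}$ gives the repelling property topologically. Your route instead builds a full Markov coding of $f|_{J(f)}$ and extracts repelling periodic points from periodic itineraries; this reproves Siebert's density result along the way and, if carried through, would yield a stronger structural statement than the theorem requires.

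The main gap is your assertion that tameness ``guarantees that the branch set $B_f$ does not meet $J(f)$.'' Tameness is a purely topological property of the Cantor set $J(f)$; it says nothing about where the branch set of $f$ sits, and I see no argument in your sketch that deduces $B_f \cap J(f) = \emptyset$ from it. The paper deliberately avoids this global claim: it only shows that \emph{periodic} points of $J(f)$ are not branch points, using the H\"older/local-index argument and the uqr hypothesis (so that $K_I(f^p)$ stays bounded), not tameness. Your normal-families/expansion step, and indeed the very existence of the inverse branches you rely on, needs $B_{f^n}$ to avoid the partition pieces for all $n$, which is a strictly stronger statement that is left unproved. A secondary point: the uniform shrinking of cylinder diameters follows directly from compactness of $J(f)$ plus total disconnectedness (each nested cylinder intersection is a singleton, and there are only finitely many cylinders at each level), so the normal-families argument is not needed for that; what it would really be needed for is the strict engulfing $\overline{U_{i_0}} \subset f^p(U_{i_0})$ along the chosen branch, which again presupposes injectivity of the branch and hence the branch-set control you have not established. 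If you replace the unjustified branch-set claim with the paper's local argument at periodic points, and invoke Siebert for density, the symbolic-dynamics scaffolding becomes unnecessary and you arrive at essentially the paper's proof.
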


\begin{definition}
A set $E\subset \R^m$ is called a \emph{spider's web} if $E$ is connected and there exists a sequence of bounded topologically convex domains $G_n$ with $G_n \subset G_{n+1}$, for $n \in \N$, $\partial G_n \subset E$ and $\bigcup_{n \in \N} G_n = \R^m$.
\end{definition}

We recall that a topologically convex domain is one for which the only components of the complement are unbounded.
With this definition, $\R^m$ is itself a spider's web, but since every quasiregular mapping of transcendental type has infinitely many periodic points by \cite[Theorem 4.5]{Siebert2}, the fast escaping set can never be $\R^m$. We now state our final theorem.

\begin{theorem}
\label{mainthm}
Let $f:\R^m \to \R^m$ be a uqr mapping of polynomial type whose Julia set $J(f)$ is a tame Cantor set and let $x_0 \in J(f)$ be a repelling fixed point. If $L$ is a linearizer of $f$ at $x_0$, then $A(L)$  is a spider's web.
\end{theorem}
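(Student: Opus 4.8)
The plan is to reduce the statement to a topological criterion for a spider's web and then to verify that criterion using the functional equation $f^{n}\circ L=L\circ T^{n}$ together with the hypothesis that $J(f)$ is a Cantor set.

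First I would record the following elementary reduction. Suppose $Z\subseteq\R^{m}$ is a closed, totally disconnected set with $A(L)\supseteq\R^{m}\setminus Z$. Then $Z$ is a countable union of compact totally disconnected sets, so $\dim Z\le 0\le m-2$ (note $m\ge 2$); hence $Z$ does not separate $\R^{m}$, so $\R^{m}\setminus Z$ is open, dense and connected, and in particular $A(L)$ is connected. Moreover, by general position one can choose bounded topologically convex domains $G_{n}$ with $B(0,n)\subseteq G_{n}$, $\overline{G_{n}}\subseteq G_{n+1}$ and $\partial G_{n}\cap Z=\emptyset$ for all $n$; then $\partial G_{n}\subseteq\R^{m}\setminus Z\subseteq A(L)$ and $\bigcup_{n}G_{n}=\R^{m}$, so $A(L)$ is a spider's web. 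It therefore suffices to produce such a set $Z$.

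Second, I would take $Z=J(L)$. By Theorem \ref{julialin}(vi) we have $J(L)=\partial A(L)$, so once we know that $A(L)$ is dense in $\R^{m}$ it follows that $\R^{m}\setminus A(L)\subseteq\partial A(L)=J(L)$, i.e.\ $A(L)\supseteq\R^{m}\setminus J(L)$; and $J(L)$ is closed, being the boundary of a set. Thus the theorem reduces to showing (i) that $J(L)$ is totally disconnected and (ii) that $A(L)$ is dense. For (i), the idea is to prove $L(J(L))\subseteq J(f)$, so that $J(L)\subseteq L^{-1}(J(f))$: since $L$ is quasiregular it is a light map, and the preimage under a light map of the totally disconnected set $J(f)$ is again totally disconnected (a connected subset of $L^{-1}(J(f))$ maps into a single point of $J(f)$, hence lies in a single fibre of $L$, hence is a point). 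For (ii), note that because $f$ has polynomial type and $J(f)$ is a Cantor set we have $\R^{m}=J(f)\sqcup I(f)$ with $I(f)=A(f)$ (there are no bounded Fatou components, exactly as for polynomials with Cantor Julia set), so $\R^{m}\setminus L^{-1}(J(f))=L^{-1}(A(f))$ is open and dense; the plan is to show it is contained in $A(L)$, which gives density of $A(L)$ and simultaneously yields (i), since then $\operatorname{int}A(L)=\R^{m}\setminus L^{-1}(J(f))$ forces $\partial A(L)\subseteq L^{-1}(J(f))$. Here Theorem \ref{dense} and the polynomial-type distortion estimates, which give $M(2r,L)\gtrsim M(r,L)^{d/K}$ for large $r$, enter.

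The main obstacle is exactly this last step, relating the dynamics of $L$ to that of $f$. The functional equation controls $L$ on the ray $\{2^{n}x:n\in\N\}$ — that is, it identifies $L(2^{n}x)$ with $f^{n}(L(x))$ — but it says nothing directly about the iterates $L^{n}(x)$, which is what membership in $A(L)$ and in $J(L)$ actually concerns. The delicate work is therefore to translate the fast escape of the forward $f$-orbit of $L(x)$, together with the expanding behaviour of $f$ near $x_{0}$ and the growth of $L$, into control of the genuine $L$-orbit of $x$ and of the forward images $O_{L}^{+}(U)$ of neighbourhoods of points of $J(L)$, using the capacity definition of $J(L)$ and the iterated-maximum-modulus definition of $A(L)$. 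Everything else — the topology of $Z$, the reduction to a spider's web, and the structure of $J(f)$ — is then routine.
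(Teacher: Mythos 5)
The proposal takes a genuinely different route from the paper, and the route has a serious gap exactly where you flag ``the delicate work'': the entire theorem is hiding in the unproved claim that $L^{-1}(I(f))\subseteq A(L)$, and this claim is both nonobvious and quite possibly false.

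Your reduction --- find a closed totally disconnected $Z$ with $A(L)\supseteq\R^m\setminus Z$ --- is clean, and the general-position step for extracting topologically convex $G_n$ with $\partial G_n\cap Z=\emptyset$ is plausible for a set of topological dimension $0$. But notice this is a \emph{strictly stronger} conclusion than ``$A(L)$ is a spider's web'': you would be showing that $A(L)$ has full measure and that $\R^m\setminus A(L)$ is totally disconnected. Nothing in the hypotheses forces that, and I do not believe it holds in general for these linearizers. The functional equation $f^n\circ L=L\circ T^n$ controls the quantities $L(2^nx)=f^n(L(x))$, not the iterates $L^n(x)$; there is no mechanism that turns ``$L(x)$ escapes under $f$'' into ``$x$ escapes fast under $L$.'' Indeed $L$ is a transcendental-type quasiregular map and hence has infinitely many periodic points by \cite[Theorem 4.5]{Siebert2}; such a point $x$ satisfies $x\notin I(L)\supseteq A(L)$, yet there is no reason $L(x)$ should land in the measure-zero Cantor set $J(f)$ rather than in $I(f)$. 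Any such periodic point with $L(x)\in I(f)$ falsifies $L^{-1}(I(f))\subseteq A(L)$, and with it the whole plan. (Your parallel claim $J(L)\subseteq L^{-1}(J(f))$ is derived from the same inclusion and inherits the same gap.)

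The paper proves the theorem by a direct minimum-modulus construction. The tame Cantor structure of $J(f)$ is used \emph{locally near} $x_0$: one chooses a topologically convex curve $\gamma_\delta\subset I(f)$ enclosing $x_0$ inside the region where $L$ is a homeomorphism, pulls it back to a continuum $\Gamma_\delta$ around $0$, and then defines $\Gamma^r=2^{\ell_2}\Gamma_\delta$ for suitable $\ell_2$. On $\Gamma^r$ one has $L=f^{\ell_2}\circ L\circ T^{-\ell_2}$, and since $L(T^{-\ell_2}\Gamma^r)=\gamma_\delta\subset I(f)$, the polynomial-type H\"older estimate (Lemma \ref{holderinf}) gives a lower bound on $|L|$ over $\Gamma^r$ that beats $M(r,L)$ once $\mu>\frac{\log d+\log K}{\log d-\log K}$ (Lemma \ref{minmodgrowth}); combined with the growth estimate of Lemma \ref{reggrowth2}, the continua $\Gamma^{r_n}$ with $r_n=2^nM^n(R,L)$ bound the domains $G_n$ required by the spider's-web criterion Lemma \ref{char}. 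This argument never needs, and never obtains, any topological control on $\R^m\setminus A(L)$; it only needs the minimum modulus to be large enough on one continuum per annulus $A(r,r^\mu)$. That is why the tame Cantor hypothesis is used where it is used --- to manufacture $\gamma_\delta$ --- rather than as a global constraint on $J(L)$.

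If you want to salvage your strategy, you would need to prove a lemma of the shape ``$L^{-1}(I(f))\subseteq I(L)$'' or, better, relate $J(L)$ to $L^{-1}(J(f))$; but as the discussion above shows, this requires an argument going well beyond the functional equation, and it proves more than the theorem asks. The paper's route is both weaker in conclusion and more robust in method.
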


\begin{remark}
In \cite[Theorem 1.6]{BDF}, it is shown that if the minimum modulus of a quasiregular mapping $f$ of transcendental type is comparable to the maximum modulus on every annulus of the form $A(r,Cr)$ for some $C>1$, then $A(f)$ is a spider's web. This theorem cannot be used here since we do not have such estimates for linearizers. However, we prove Theorem \ref{mainthm} by showing that the minimum modulus is comparable to the maximum modulus on annuli of the form $A(r,r^{\mu})$ for some $\mu >1$.
\end{remark}

The rest of the paper is organized as follows. In section 2, we cover material from the iteration theory of quasiregular mappings, in section 3 we prove Theorem \ref{growth}, in section 4 we prove Theorem \ref{omitted}, in section 5 we prove Theorem \ref{julialin} in section 6 we prove Theorem \ref{dense} and finally in section 7 we prove Theorem \ref{mainthm}.

The author would like to thank Dan Nicks for helpful comments that improved the paper.

\section{Preliminaries}

Throughout we will write 
\[ B(x,R) = \{ y\in \R^m : |y-x| <R \} \]
for the ball of radius $R$ centred at $x$ and 
\[ A(R_1,R_2) = \{ y\in \R^m : R_1 < |y| < R_2 \} \]
for the annulus centred at $0$ with radii $R_1,R_2$.

\subsection{Quasiregular maps}

A mapping $f:E \rightarrow \R^{m}$ defined on a domain $E \subseteq \R^{m}$ is called quasiregular if $f$ belongs to the Sobolev space $W^{1}_{m, loc}(E)$ and there exists $K \in [1, \infty)$ such that 
\begin{equation}
\label{eq2.1}
\av f'(x) \av ^{m} \leq K J_{f}(x)
\end{equation}
almost everywhere in $E$. Here $J_{f}(x)$ denotes the Jacobian determinant of $f$ at $x \in E$. The smallest constant $K \geq 1$ for which (\ref{eq2.1}) holds is called the outer dilatation $K_{O}(f)$. If $f$ is quasiregular, then we also have
\begin{equation}
\label{eq2.2}
J_{f}(x) \leq K' \inf _{\av h \av =1} \av f'(x) h \av ^{n}
\end{equation}
almost everywhere in $E$ for some $K' \in[1, \infty)$. The smallest constant $K' \geq 1$ for which (\ref{eq2.2}) holds is called the inner dilatation $K_{I}(f)$. The dilatation $K=K(f)$ of $f$ is the larger of $K_{O}(f)$ and $K_{I}(f)$, and we then say that $f$ is $K$-quasiregular. Informally, a quasiregular mapping sends infinitesimal spheres to infinitesimal ellipsoids with bounded eccentricity.
A foundational result in the theory of quasiregular mappings is Rickman's Theorem, which states that a non-constant quasiregular mapping $f:\R^m\to \R^m$ can only omit $q(m,K)$ many values, where $q$ depends only on $m$ and $K$.
Quasiregular mappings are a generalization of analytic and meromorphic functions in the plane; see Rickman's monograph \cite{Rickman} for many more details. In particular, quasiregular mappings are open and discrete.

A quasiregular mapping $f:\R^m \to \R^m$ is said to be of polynomial type if $|f(x)| \to \infty$ as $|x| \to \infty$, whereas it is said to be of transcendental type if this limit does not exist and hence $f$ has an essential singularity at infinity. This is in direct analogy with polynomials and transcendental entire functions in the plane.

Denote by $i(x,f)$ the local index of $f$ at $x$.
The following result shows that quasiregular mappings are locally H\"older continuous.

\begin{theorem}[{\cite[Theorem III.4.7]{Rickman}}]
\label{rick2}
Let $f:E\to \R^n$ be quasiregular and non-constant, and let $x\in E$. Then there exist positive numbers $\rho, A,B$ such that for $y\in B(x,\rho)$,
\[ A|y-x|^{\nu} \leq |f(x)-f(y)| \leq B|y-x|^{\mu},\]
where $\nu = (K_O(f)i(x,f))^{1/(n-1)}$ and $\mu = (i(x,f)/K_I(f))^{1/(n-1)}$.
\end{theorem}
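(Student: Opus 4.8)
\emph{Plan of proof.} This is a classical distortion estimate of Martio--Rickman--V\"ais\"al\"a type (see \cite[Ch.~III]{Rickman}); the natural route is through the conformal capacity of ring-type condensers, and I sketch it. After a translation assume $f(x)=0$, and write $i=i(x,f)$. The first step is to localize: since $f$ is non-constant, open and discrete, there is $\rho_0>0$ so small that for every $0<t\le\rho_0$ the component $U(x,f,t)$ of $f^{-1}(B(0,t))$ containing $x$ is a normal domain on which $f$ is a proper map of degree $i$ onto $B(0,t)$, with $f^{-1}(0)\cap U(x,f,\rho_0)=\{x\}$; these sets increase with $t$. Fix $r_0<R$ with $B(x,r_0)\subset U(x,f,\rho_0)\subset B(x,R)$, and set $l(r)=\min_{|y-x|=r}|f(y)|$, $L(r)=\max_{|y-x|=r}|f(y)|$. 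Using connectedness of normal domains and the maximum principle for $|f|$ one gets the inclusions $U(x,f,l(r))\subset B(x,r)\subset U(x,f,L(r))$ for $0<r\le\rho_0$.

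The heart of the argument is to compare the capacity of the annular condenser between two nested normal domains with that of a round spherical annulus: for $0<s<t\le\rho_0$,
\[ \frac{i\,\omega_{n-1}}{K_I(f)}\Big(\log\tfrac{t}{s}\Big)^{1-n} \le \operatorname{cap}\big(U(x,f,t),\overline{U(x,f,s)}\big) \le i\,K_O(f)\,\omega_{n-1}\Big(\log\tfrac{t}{s}\Big)^{1-n}, \]
where $\omega_{n-1}$ is the area of the unit sphere $S^{n-1}$, so that $\omega_{n-1}(\log(t/s))^{1-n}$ is exactly the capacity of $(B(0,t),\overline{B(0,s)})$. I would obtain the upper bound by using $u\circ f$ as a test function, where $u$ is the radial extremal function of the round condenser, together with $\av f'\av^n\le K_O J_f$ and the change-of-variables formula, in which the multiplicity of the degree-$i$ proper restriction supplies the factor $i$; the lower bound is dual, via Poletsky's and V\"ais\"al\"a's inequalities for the moduli of the associated path families, a target path lifting with multiplicity $i$ to paths in $U(x,f,t)$.

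Substituting the inclusions above into these capacity bounds, using the monotonicity of capacity with respect to the core and to the domain, the explicit value of the capacity of a round condenser, and finally taking $(1-n)$-th roots (which reverses the inequalities), one pinches $l(r)$ and $L(r)$ between constant multiples of $r^{\nu}$ and $r^{\mu}$ for small $r$, with $\nu=(K_O(f)\,i)^{1/(n-1)}$ and $\mu=(i/K_I(f))^{1/(n-1)}$. The genuinely delicate point -- the main obstacle -- is to promote these \emph{radial} estimates to the stated two-sided bound holding at \emph{every} $y\in B(x,\rho)$: this requires the spherical oscillation ratio $H(x,f,r)=L(r)/l(r)$ to stay bounded as $r\to0$, which is itself drawn from the same capacity machinery, applied to the condenser $(U(x,f,L(r)),\overline{U(x,f,l(r))})$ and requiring a uniform positive lower bound for its capacity. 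Granting this, one takes $\rho$ small enough and chooses $A$ and $B$ accordingly to conclude.
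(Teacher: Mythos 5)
This statement is not proved in the paper at all: it is imported verbatim, with citation, as Theorem III.4.7 of Rickman's monograph \cite{Rickman}, so there is no internal argument to compare yours against. Your sketch is, in substance, the standard capacity-of-condensers proof from that source (and from Martio--Rickman--V\"ais\"al\"a): normal neighbourhoods $U(x,f,t)$ on which $f$ is proper of degree $i=i(x,f)$, the two-sided comparison of $\operatorname{cap}\bigl(U(x,f,t),\overline{U(x,f,s)}\bigr)$ with the round-ring capacity $\omega_{n-1}(\log(t/s))^{1-n}$ (upper bound via the test function $u\circ f$, the distortion inequality $\av f'\av^{n}\le K_O J_f$ and change of variables with multiplicity $i$; lower bound via V\"ais\"al\"a's inequality with $i$ essentially separate liftings), the inclusions $U(x,f,l(r))\subset B(x,r)\subset U(x,f,L(r))$, and finally boundedness of $H(x,f,r)=L(r)/l(r)$ to pass from radial estimates to the pointwise two-sided bound; the exponents $\nu=(K_O i)^{1/(n-1)}$ and $\mu=(i/K_I)^{1/(n-1)}$ come out on the correct sides. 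Two points deserve care if you were to write this out in full: in the pinching step the inclusion $B(x,r)\subset U(x,f,L(r))$ must be paired with the \emph{upper} capacity estimate (giving $L(r)\geq c\,r^{\nu}$) and $U(x,f,l(r))\subset B(x,r)$ with the \emph{lower} one (giving $l(r)\leq c'\,r^{\mu}$), since the two inclusions point in opposite directions and cannot be fed into a single monotonicity comparison; and the boundedness of the oscillation ratio, which you rightly single out as the delicate step, requires a Loewner/Teichm\"uller-type positive lower bound for the capacity of a condenser whose plates are continua of definite size relative to the scale $r$ -- the specific condenser $\bigl(U(x,f,L(r)),\overline{U(x,f,l(r))}\bigr)$ you name does not obviously have this property, and Rickman's argument uses a more carefully chosen pair of continua together with the $K_O$-modulus inequality. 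As an outline of the classical proof of a cited result, your proposal is sound.
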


\subsection{Iteration of quasiregular mappings}

The composition of two quasiregular mappings is again a quasiregular mapping, but the dilatation typically increases. 
A quasiregular mapping $f$ is called uniformly $K$-quasiregular, or $K$-uqr, if the dilatation of each iterate $f^k$ of $f$ is bounded above by $K$. For uniformly quasiregular mappings, there are direct analogues of the Fatou set $F(f)$ and Julia set $J(f)$ for holomorphic mappings in the plane. 
In this case, the boundary of the escaping set coincides with the Julia set. 

The following was proved in \cite{FN2}.

\begin{theorem}[\cite{FN2}]
\label{unifperf}
Let $f:S^m \to S^m$ be uniformly quasiregular. Then $J(f)$ is $\alpha$-uniformly perfect, that is, if $R$ is any ring domain separating $J(f)$, then the conformal modulus of $R$ is at most $\alpha$, for some $\alpha >0$.
\end{theorem}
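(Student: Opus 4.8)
The plan is to argue by contradiction, after reducing to a convenient ``gap'' formulation. Since a non-constant uniformly quasiregular $f$ has $J(f)$ perfect, we may assume $J(f)$ is infinite and $\emptyset \ne J(f) \subsetneq S^m$; conjugating by a M\"obius transformation (which preserves $K$-uniform quasiregularity and the moduli of ring domains) we may also assume $\infty \in F(f)$, so that $J(f) \subset \R^m$ is bounded and the Euclidean metric is available. We must find $\alpha$ with $\mathrm{mod}(R) \le \alpha$ for every ring domain $R$ separating $J(f)$; suppose not. The two complementary components of such an $R$ are non-degenerate continua meeting $J(f)$, so a ring domain of large modulus separating them contains a round annulus of comparable modulus, and we obtain $x_n \in J(f)$ and radii $0 < \rho_n < R_n < \mathrm{diam}\, J(f)$ with $R_n/\rho_n \to \infty$ and $A_n := \{y : \rho_n < |y - x_n| < R_n\} \subset F(f)$; here $x_n \in J(f) \cap B(x_n,\rho_n)$, and $J(f)$ meets $S^m \setminus \overline{B(x_n,R_n)}$.

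Beyond this I would use the three basic facts for uniformly quasiregular $f$: (a) $J(f)$ is perfect and completely invariant, $f^{-k}(J(f)) = J(f)$; (b) the blowing-up property --- as $x_n$ is a point of non-normality of $\{f^k\}$, Montel's theorem for quasiregular maps (Miniowitz, Rickman) gives that $S^m \setminus \bigcup_{k\ge0} f^k(B(x_n,\rho_n))$ has at most $q(m,K)$ points; and, crucially, (c) every iterate $f^k$ is $K$-quasiregular with the \emph{same} $K$, so the modulus and capacity inequalities for quasiregular maps apply to all iterates uniformly. The strategy is to use (a) and (b) to transport the huge ring domain $A_n$ to a location of bounded ``scale'' via an iterate: using perfectness there are infinitely many points of $J(f)$ outside $\overline{B(x_n,R_n)}$ clustering near any one of them, so I can pick $b_n$ of this kind outside the finite exceptional set, write $b_n = f^{k_n}(x_n^*)$ with $x_n^* \in B(x_n,\rho_n)$, noting $x_n^* \in J(f)$ by (a); then $A_n$ is a ring domain of modulus $\to\infty$ inside $F(f)$ separating $x_n^*$ from its image $b_n$. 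Pulling back, under $f^{k_n}$, a fixed-scale ring domain separating $J(f)$ near $b_n$, and invoking (a) and (c), I would extract a ring domain separating $J(f)$ with $x_n^*$ in its bounded complementary component and modulus bounded below by $c(K)$ times a fixed quantity; combined with $A_n$ and the superadditivity of the modulus for nested ring domains around $x_n^*$ (the Gr\"otzsch inequality), this should force $\log(R_n/\rho_n)$ to be bounded --- the desired contradiction. Equivalently and perhaps more cleanly, one can rescale: the conjugates $g_n := \sigma_n^{-1} f \sigma_n$ with $\sigma_n(x) = x_n + \rho_n x$ are $K$-quasiregular of degree $\deg f$, hence form a normal family on $S^m$, and a subsequential limit $g$ would be $K$-quasiregular of degree $\le \deg f$ with $0 \in J(g)$ but with the exterior of a fixed ball lying in $F(g)$, so $J(g)$ would have an isolated point, contradicting that Julia sets of uniformly quasiregular maps are perfect.

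The main obstacle is exactly the transfer/comparison of moduli in that step: the iterate $f^{k_n}$ is only $K$-quasiregular (good, uniformly) but has local degree up to $d^{k_n}$, which is unbounded, so the naive estimate $\mathrm{mod}(f^{-k_n}(\cdot)) \ge \mathrm{mod}(\cdot)/K$ is not available and the branching of quasiregular maps must be confronted. I expect one has to pull back along a single branch of $f^{-k_n}$ near $x_n^*$ that avoids the image of the branch set (which has zero capacity), so that $f^{k_n}$ is locally a homeomorphism of bounded distortion there and the modulus is preserved up to the factor $K$; keeping the blown-up part of the annulus inside the domain of this branch --- which is where the number of iterates $k_n$ and the geometry furnished by the blowing-up property have to be controlled --- is the delicate point. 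The normal-family route sidesteps the branching, but then requires ruling out degeneration of the limit (its being constant, or failing to be uniformly quasiregular while still having a perfect Julia set), which is a comparable amount of work.
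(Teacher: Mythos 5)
This statement is not proved in the paper at all --- it is quoted from Fletcher--Nicks \cite{FN2} --- so the only question is whether your sketch would stand on its own, and it does not: the step you yourself flag as ``the main obstacle'' is in fact the entire content of the theorem, and neither of your two routes closes it. In route one, everything up to producing the round annuli $A(x_n;\rho_n,R_n)\subset F(f)$ and the point $b_n=f^{k_n}(x_n^*)$ is standard bookkeeping; the theorem lives precisely in transporting a modulus estimate through $f^{k_n}$, whose local degree can be as large as $d^{k_n}$ with $k_n$ unbounded, and ``pull back along a single branch avoiding the branch set'' is not an argument: a ring domain is not simply connected, so avoiding the image of the branch set does not yield a single-valued branch on it, and no control of $k_n$ or of where the branch is defined is offered. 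Moreover the endgame is not cogent even granting that step: producing a second separating ring domain of definite modulus with $x_n^*$ in its bounded complementary component does not contradict the existence of the huge annulus $A_n$, and Gr\"otzsch superadditivity gives lower bounds on moduli, not the upper bound on $\log(R_n/\rho_n)$ you claim ``should'' follow. The usual shape of such arguments (M\~an\'e--da Rocha for rational maps) is the opposite: push the huge annulus \emph{forward} to a definite spherical scale, where a separating ring domain must have bounded modulus --- and that is exactly where the degree control you are missing is required.

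Route two contains a false step: $K$-quasiregular self-maps of $S^m$ of bounded degree do \emph{not} form a normal family --- the M\"obius maps $x\mapsto nx$ (degree $1$, $K=1$) are already a counterexample, and your conjugates $g_n=\sigma_n^{-1}\circ f\circ\sigma_n$ exhibit precisely this scaling degeneracy, since $g_n(0)=(f(x_n)-x_n)/\rho_n$ will in general escape to infinity. Even if one renormalises to force convergence, you would still have to show the limit is non-constant, non-injective and uniformly quasiregular, that its degree has not dropped, and that $0\in J(g)$ while a fixed exterior lies in $F(g)$ --- Julia sets are not semicontinuous under locally uniform limits, so none of this is automatic; you acknowledge this is ``a comparable amount of work,'' which is to say the proof is not there. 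In short: the reduction to round annuli, complete invariance, the Miniowitz--Montel blowing-up property and the uniform dilatation bound for iterates are the right ingredients, but the proposal stops exactly at the point where the theorem begins.
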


This result essentially says that any ring domain separating $J(f)$ cannot be too thick. We next prove a presumably well-known result on the growth of quasiregular mappings of polynomial type near infinity. 

\begin{lemma}
\label{holderinf}
Let $h:\R^m \to \R^m$ be a $K$-quasiregular mapping of polynomial type of degree $d>K$. Then there exist $R_0>0$ and positive constants $C_1,C_2$ such that
\[ C_1 ^{ q_j ( (d/K)^{1/(m-1)} ) } |x|^{ (d/K)^{j/(m-1)} }
\leq |h^j(x)|
\leq C_2 ^{ q_j ( (dK)^{1/(m-1)} ) } |x|^{ (dK)^{j/(m-1)} }, \]
for $|x| >R_0$, where $q_j$ is the polynomial $q_j(y) = y^{j-1} + y^{j-2} + \ldots + y +1$.
\end{lemma}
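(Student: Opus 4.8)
The plan is to iterate a single-step growth estimate for $h$ near infinity and then unwind the resulting product of exponents. First I would establish the base case $j=1$: since $h$ is a $K$-quasiregular mapping of polynomial type of degree $d$, the local index at infinity equals $d$, so applying a version of Theorem \ref{rick2} at infinity (via the change of coordinates $x\mapsto x/|x|^2$, which sends $\infty$ to $0$ and conjugates $h$ to a quasiregular germ) yields constants $C_1,C_2>0$ and $R_0>0$ with
\[
C_1|x|^{(d/K)^{1/(m-1)}} \le |h(x)| \le C_2|x|^{(dK)^{1/(m-1)}}
\]
for $|x|>R_0$. The exponents here are exactly $\nu=(K_O(h)\,i(\infty,h))^{1/(m-1)}$ and $\mu=(i(\infty,h)/K_I(h))^{1/(m-1)}$ from Theorem \ref{rick2}, using $i(\infty,h)=d$ and bounding $K_O,K_I$ by $K$. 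One must also check that $h(x)$ stays outside $B(0,R_0)$ when $|x|>R_0$ — this is where $d>K$ is used, since it forces the lower exponent $(d/K)^{1/(m-1)}>1$, so after enlarging $R_0$ the orbit of $\{|x|>R_0\}$ remains in $\{|x|>R_0\}$ and the one-step estimate can be applied repeatedly.

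Next I would induct on $j$. Writing $\alpha = (d/K)^{1/(m-1)}$ and $\beta=(dK)^{1/(m-1)}$, suppose the claimed bounds hold for $j$. Applying the one-step lower bound to the point $h^j(x)$ (which has modulus $>R_0$ by the previous paragraph) gives
\[
|h^{j+1}(x)| \ge C_1 |h^j(x)|^{\alpha} \ge C_1 \left( C_1^{q_j(\alpha)} |x|^{\alpha^j} \right)^{\alpha} = C_1^{1 + \alpha q_j(\alpha)} |x|^{\alpha^{j+1}},
\]
and since $1+\alpha q_j(\alpha) = 1 + \alpha(\alpha^{j-1}+\cdots+1) = \alpha^j + \cdots + \alpha + 1 = q_{j+1}(\alpha)$, this is exactly the desired lower bound at level $j+1$. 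The upper bound is identical with $\alpha$ replaced by $\beta$ and $C_1$ by $C_2$ (the only subtlety is that $C_2$ may be taken $\ge 1$ and $C_1\le 1$ after adjusting, so the powers $C_i^{q_j(\cdot)}$ behave monotonically and no sign issues arise).

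The main obstacle is the base case, specifically justifying the growth estimate \emph{at infinity} rather than at an interior point: Theorem \ref{rick2} as stated is local and finite, so I need to transfer it through the inversion $\iota(x)=x/|x|^2$, verify that $\iota\circ h\circ\iota$ extends quasiregularly across $0$ with local index $d$ there (this is standard for polynomial-type maps, since the degree equals the local index at the puncture), and translate the Hölder exponents back. A cleaner alternative, which I would use if available, is to invoke directly a known two-sided growth estimate for quasiregular maps of polynomial type near infinity of the form $|x|^{(\deg/K)^{1/(m-1)}} \lesssim |h(x)| \lesssim |x|^{(\deg\cdot K)^{1/(m-1)}}$; such estimates appear in the literature on quasiregular dynamics (e.g.\ in work of Bergweiler, Nicks and others on the escaping set), and then the lemma is purely the bookkeeping in the induction above. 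Either way, once the base case is in hand the rest is a routine telescoping of the exponents via the identity $1+y\,q_j(y)=q_{j+1}(y)$.
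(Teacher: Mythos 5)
Your proposal follows essentially the same approach as the paper: establish the one-step estimate near infinity by transporting Theorem \ref{rick2} through the inversion $x\mapsto x/|x|^2$ and bounding the dilatations by $K$, then induct using the telescoping identity $1+y\,q_j(y)=q_{j+1}(y)$. You have actually filled in more detail than the published proof (which compresses the base case into ``it is not hard to see'' and the induction into one line), including the role of $d>K$ in keeping the orbit inside $\{|x|>R_0\}$; the argument is correct.
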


\begin{proof}
By the hypotheses, a neighbourhood of infinity is contained in $I(h)$, see \cite{FN}, and so infinity is an attracting fixed point of $h$. By the H\"{o}lder continuity for quasiregular mappings, Theorem \ref{rick2}, and conjugating by the M\"{o}bius map $x \mapsto x/|x|^2$, it is not hard to see that there exist $R_0>0$ such that $\{x\in \R^m : |x|>R_0\} \subset I(h)$ and positive constants $C_1,C_2$ such that
\[ C_1 |x|^{(d/K)^{1/(m-1)} }\leq |h(x)| \leq C_2 |x|^{(dK)^{1/(m-1)} },\]
for $|x|>R_0$.
The result then follows by induction.
\end{proof}

\subsection{The fast escaping set}

We summarize some of the results from \cite{BDF} on the fast escaping set.
Let $f:\R^m \to \R^m$ be a quasiregular mapping of transcendental type. 
\begin{defna}
The fast escaping set is
\[ A(f) = \{ x\in \R^m: \text{ there exists }P\in \N : |f^{n+P}(x)| \geq M^n(R,f), \text{ for all } n\in \N \},\]
where $R>0$ is any value such that $M^n(R,f) \to \infty$ as $n\to \infty$. 
\end{defna}
The fast escaping set does not depend on the particular value of $R$. For such values of $R$, the fast escaping set with respect to $R$ is 
\[ A_R(f) = \{ x \in \R^m : |f^n(x)| \geq M^{n}(R,f), n \in \N\}. \]

By \cite[Theorem 1.2]{BDF}, $A(f)$ is non-empty and every component of $A(f)$ is unbounded. We will use the following characterization of spider's webs for $A_R(f)$.

\begin{lemma}[Proposition 6.5, \cite{BDF}]
\label{char}
Let $f$ be a quasiregular mapping of transcendental type.
Then $A_R(f)$ is a spider's web if and only if there exists a sequence $G_n$ of bounded topoloigcally convex domains such that for all $n \geq 0$,
\[ B(0,M^n(R,f)) \subset G_n,\]
and $G_{n+1}$ is contained in a bounded component of $\R^m \setminus f(\partial G_n)$.
\end{lemma}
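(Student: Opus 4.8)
The plan is to prove the two implications of the stated equivalence separately; throughout one uses only that a non-constant quasiregular map is open and discrete, the monotonicity of $r\mapsto M(r,f)$, and a degree (winding-number) argument. For a compact connected set $E\subset\R^m$ with $0\notin E$, say that $E$ \emph{surrounds} a connected set $D$ if $D$ lies in a bounded component of $\R^m\setminus E$.

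\emph{Sufficiency.} Suppose the domains $G_n$ exist. The key step is to show $\partial G_n\subset A_R(f)$ for every $n$. For this one proves, by induction on $j\ge 0$, that $f^{\,j}(\partial G_n)$ surrounds $G_{n+j}$ and that $f^{\,j}(\partial G_n)\subset\{x\in\R^m:|x|\ge M^{n+j}(R,f)\}$. The case $j=0$ is immediate, since $G_n$ is a bounded component of $\R^m\setminus\partial G_n$ and $B(0,M^n(R,f))\subset G_n$. In the inductive step one pushes forward by $f$: using that $f$ is open and discrete, that $f^{\,j}(\partial G_n)$ surrounds $G_{n+j}$, and the hypothesis that $G_{n+j+1}$ lies in a bounded component of $\R^m\setminus f(\partial G_{n+j})$, one obtains that $f^{\,j+1}(\partial G_n)$ surrounds $G_{n+j+1}$; the modulus bound then follows since $B(0,M^{n+j+1}(R,f))\subset G_{n+j+1}$. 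Granting this, every $x\in\partial G_n$ satisfies $|f^{\,j}(x)|\ge M^{n+j}(R,f)\ge M^j(R,f)$ for all $j\ge 0$, so $\partial G_n\subset A_R(f)$. Since $M^n(R,f)\to\infty$ the $G_n$ exhaust $\R^m$, and a standard filling-in of the bounded complementary components of the closed set $A_R(f)$ yields a nested sequence of bounded topologically convex domains with boundary in $A_R(f)$ exhausting $\R^m$, while $A_R(f)$ is seen to be connected; hence $A_R(f)$ is a spider's web.

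\emph{Necessity.} Suppose $A_R(f)$ is a spider's web. One first observes that $f(A_R(f))\subset A_R(f)$ (immediate from the definition of $A_R(f)$ together with $M^{n+1}(R,f)\ge M^n(R,f)$) and that every component of $\R^m\setminus A_R(f)$ is bounded. Build the $G_n$ recursively. Let $G_0$ be the bounded topologically convex domain obtained by adjoining to $\overline{B(0,R)}$ the bounded complementary components of $A_R(f)$ that meet it and then filling any bounded holes, so that $\partial G_0\subset A_R(f)$. Given $G_n$ with $\partial G_n\subset A_R(f)$ and $B(0,M^n(R,f))\subset G_n$: the set $f(\partial G_n)$ is a compact connected subset of $A_R(f)$, and (by the degree argument) it surrounds $0$; let $V_n$ be the bounded component of $\R^m\setminus f(\partial G_n)$ containing $0$ and form $G_{n+1}$ as the analogous topologically convex hull of $\overline{B(0,M^{n+1}(R,f))}$ inside $V_n$. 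Then $\partial G_{n+1}\subset f(\partial G_n)\cup A_R(f)=A_R(f)$, and $G_{n+1}\subset V_n$, a bounded component of $\R^m\setminus f(\partial G_n)$, as required; for this to make sense one needs $B(0,M^{n+1}(R,f))\subset V_n$, which is the crucial point addressed next.

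\textbf{The main obstacle.} Both directions rest on the correct propagation of the surrounding relation under $f$: in the sufficiency proof, the inductive step together with the strengthening of the trivial bound $|f|\ge M(R,f)$ on $\partial G_n$ to $|f^{\,j}|\ge M^{n+j}(R,f)$; in the necessity proof, the inclusion $B(0,M^{n+1}(R,f))\subset V_n$. The difficulty is that $f$ is of transcendental type, hence not proper: preimages of compact sets need not be compact, $f$ may ``fold'', and one cannot simply assert that the $f$-image of a set surrounding a ball surrounds the image of that ball. The proof must instead track, stage by stage and component by component, how $f$ acts on the relevant bounded complementary components, combining the openness and discreteness of $f$, a degree/winding-number (argument-principle) argument, and the maximum-modulus principle for quasiregular mappings, with the geometric hypothesis on the $G_n$ feeding the induction at each level. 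Isolating these topological statements, and dealing with the boundary cases where closures touch, is where the real work lies; the remaining ingredients — the filling-in giving a nested exhaustion, connectedness of $A_R(f)$, and the invariance $f(A_R(f))\subset A_R(f)$ — are routine.
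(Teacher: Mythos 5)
You are asked to compare against the paper's own proof, but note first that the paper does not prove this lemma at all: it is imported verbatim as Proposition 6.5 of \cite{BDF}, so there is no internal argument to match. Judged on its own terms, your proposal is an outline rather than a proof, and its central step is not merely unwritten but, as formulated, does not follow from the ingredients you cite. In the sufficiency direction your induction claims that if $f^{j}(\partial G_n)$ surrounds $G_{n+j}$ and $G_{n+j+1}$ lies in a bounded component of $\R^m\setminus f(\partial G_{n+j})$, then $f^{j+1}(\partial G_n)$ surrounds $G_{n+j+1}$. The hypothesis of the lemma only controls $f$ on the set $\partial G_{n+j}$ itself, whereas $f^{j}(\partial G_n)$ is merely a continuum surrounding $G_{n+j}$: it need not be contained in, or anywhere near, $\partial G_{n+j}$, and since $f$ is of transcendental type its values on the uncontrolled portion of that continuum (which may lie at arbitrarily large modulus) can land anywhere, including inside $G_{n+j+1}$ or inside $B(0,M^{n+j+1}(R,f))$. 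So the inductive step collapses, and with it the asserted conclusion $\partial G_n\subset A_R(f)$, which is itself dubious in general: the hypothesis constrains only one application of $f$ to each $\partial G_n$, so there is no reason every point of $\partial G_n$ should be fast escaping. A correct argument has to work with pullbacks (components of $G_k\cap f^{-1}(H_k)$, where $H_k$ is the bounded complementary component of $f(\partial G_k)$ containing $G_{k+1}$), using that $f$ maps such a component onto $H_k$ with boundary mapped into $\partial H_k\subset\{|y|\ge M^{k+1}(R,f)\}$, and then extract limiting separating continua inside $A_R(f)$; this is the substance of the proof in \cite{BDF} (modelled on Rippon--Stallard) and it is precisely the part you label ``the main obstacle'' and leave undone.

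The same remark applies to the necessity direction: the inclusion $B(0,M^{n+1}(R,f))\subset V_n$, which you correctly identify as the crucial point, is exactly what requires the degree/covering argument, and deferring it means that direction is also not established. In short, the skeleton (two implications, surrounding relations, openness and discreteness, filling in bounded complementary components, monotonicity of $M(r,f)$) is sensible, but everything that makes the lemma true is contained in the steps you have flagged and postponed, and the one mechanism you do propose for carrying out the induction is invalid as stated.
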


If $A_R(f)$ is a spider's web, then $A(f)$ is a spider's web, since $A_R(f) \subset A(f)$ and every component of $A(f)$ is unbounded.

\section{Order of growth}

\begin{proof}[Proof of Theorem \ref{growth}] 

Let $R_0$ be the constant from Lemma \ref{holderinf} and suppose that $M(r,L) \geq R_0$ for $r \geq r_1$. Fix $r_0 \geq r_1$ and let $r\in [r_0, 2r_0]$. Then by Lemma \ref{holderinf} and the fact that $M(r,L)$ is increasing in $r$,
\[ C_1 ^{ q_j ( (d/K)^{1/(m-1)} ) } M(r_0,L)^{ (d/K)^{j/(m-1)} }
\leq M(r,f^j \circ L)
\leq C_2 ^{ q_j ( (dK)^{1/(m-1)} ) } M(2r_0,L)^{ (dK)^{j/(m-1)} }, \]
where $q_j(y) = (y^j-1)/(y-1)$. Since $\log q_j(y) = j \log y +O(1)$ as $j \to \infty$, $f^j \circ L = L\circ T^j$ and $M(r,L \circ T^j) = M(2^jr, L)$ we have
\[ j \log \left ( \frac{d}{K} \right )^{1/(m-1)} -O(1) \leq \log \log M(2^jr, L) \leq j \log \left ( dK \right) ^{1/(m-1)} +O(1),\]
uniformly for $r\in[r_0,2r_0]$ as $j\to \infty$,
and hence
\[ \frac{ \log \left ( d/K \right )^{1/(m-1)} \log 2^j r }{\log 2} -O(1) \leq \log \log M(2^jr, L) \leq \frac{ \log \left ( dK \right) ^{1/(m-1)} \log 2^jr}{\log 2} +O(1),\]
which gives
\[ \frac{ \log \left ( d/K \right )^{1/(m-1)} }{\log 2} -o(1) \leq \frac{ \log \log M(2^jr, L)}{\log 2^j r} \leq \frac{ \log \left ( dK \right) ^{1/(m-1)}}{\log 2} +o(1)\]
uniformly for $r\in[r_0,2r_0]$ as $j\to \infty$.
These two inequalities imply the result for the order and the lower order. 

\end{proof}

\section{Omitted values}

\begin{proof}[Proof of Theorem \ref{omitted}]

Since $L(0) = x_0$, the point $x_0$ is never omitted. If $y\in \R^m \setminus \mathcal{E}(f)$, then the backward orbit $O^-(y)$ has infinitely many elements. Since $L$ omits at most $q(m,K)$ many values by Rickman's Theorem, $O^-(y)$ must intersect $L(\R^m)$. That is, there exists $k\in \N$ and $w\in \R^m$ with $L(w) \in f^{-k}(y)$. Therefore $y=f^k(L(w)) = L(2^kw)$ and so $y\notin \mathcal{O}(L)$. Hence $\mathcal{O}(L) \subset \mathcal{E}(f) \setminus \{x_0\}$.

Next, let $y\in \R^m \setminus \mathcal{O}(L)$. If $y=x_0$, there is nothing to prove, so suppose $y\neq x_0$. Then there exists $x\neq 0$ with $L(x) = y$. Hence $L(x/2^k) \in f^{-k}(y)$ by the iterated functional equation. Since $x\neq 0$ and $L$ is injective in a neighbourhood of $0$, the backwards orbit of $y$ has infinitely many elements.

\end{proof}

\section{The Julia set of $L$}

In this section we prove Theorem \ref{julialin}. As defined in \cite{BN}, a quasiregular mapping $f:\R^m\to \R^m$ has the {\it pits effect} if there exists $N\in \N$ such that for all $c>1$ and all $\epsilon >0$, there exists $R_0$ such that if $R>R_0$, then the set
\[ \{ x\in \R^m  : R\leq |x| \leq cR, |f(x)|\leq 1 \}\]
can be covered by $N$ balls of radius $\epsilon R$, that is, the set where $f$ is small is not too large. By \cite[Theorem 1.8]{BN}, if a quasiregular mapping of transcendental type does not have the pits effect, then the Julia set has the properties given in the statement of Theorem \ref{julialin} $(i)-(v)$. Hence parts $(i)-(v)$ of the theorem are proved by the following lemma.

\begin{lemma}
\label{julialemma}
Let $L:\R^m \to \R^m$ be a Poincar\'e linearizer. Then $L$ does not have the pits effect.
\end{lemma}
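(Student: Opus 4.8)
\textbf{Proof proposal for Lemma \ref{julialemma}.}

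The plan is to exhibit, inside arbitrarily large annuli, arbitrarily many points of $\{x:|L(x)|\le 1\}$ that lie at well--separated scales; by the definition of the pits effect this is precisely what must be ruled out, and \cite[Theorem 1.8]{BN} then gives parts $(i)$--$(v)$ of Theorem \ref{julialin}. Throughout I use the normalisation $x_0=0$, so that $f(0)=0$ and $L(0)=0$, and I recall that $L$ is of transcendental type, since by Theorem \ref{growth} its order satisfies $\rho_L\ge(\log d-\log K)/\log 2>0$ while every mapping of polynomial type has order zero.

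The key step is to find a single point $w\ne 0$ with $L(2^nw)=0$ for all $n\ge 1$. Since $x_0$ is repelling, $f$ is injective near $x_0$, so $i(x_0,f)=1$, and because $d\ge 2$ the fibre $f^{-1}(x_0)$ contains some $x_1\ne x_0$. Not every such point can be an exceptional value of $f$: if every point of $f^{-1}(x_0)$ other than $x_0$ were exceptional, then $O_f^-(x_0)$, being contained in the union of $\{x_0\}$ and finitely many sets of the form $\{x_1\}\cup O_f^-(x_1)$, would be finite, so $x_0$ would be exceptional; but a finite completely invariant set containing $x_0$ forces $x_0$ to be totally ramified with $i(x_0,f)=d\ge 2$, contradicting $i(x_0,f)=1$. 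Fix $x_1\in f^{-1}(x_0)\setminus\{x_0\}$ with $x_1\notin\mathcal E(f)$; then $x_1\notin\mathcal O(L)$ by Theorem \ref{omitted}, so there is $w$ with $L(w)=x_1$, and $w\ne 0$ since $L(0)=x_0\ne x_1$. Iterating the functional equation, $L(2^nw)=f^n(L(w))=f^n(x_1)=f^{n-1}(x_0)=0$ for all $n\ge 1$, so the $T$--invariant ray $\{2^nw:n\ge 1\}$ lies in $L^{-1}(0)\subset\{x:|L(x)|\le 1\}$.

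Finally I feed this ray into the covering condition. Given $N\in\N$, put $c=2\cdot 4^N$ and $\epsilon=1$. For any $R>|w|$ let $n_0\ge 1$ be minimal with $2^{n_0}|w|\ge R$, so $R\le 2^{n_0}|w|<2R$, and set $q_j=2^{n_0+2j}w$ for $j=0,\dots,N$. Each $q_j$ lies on the ray above, hence $|L(q_j)|=0\le 1$; also $|q_j|=4^j\cdot 2^{n_0}|w|\in[4^jR,2\cdot 4^jR)$, so $R\le|q_j|\le cR$; and, as the $q_j$ are positive multiples of $w$, for $i<j$ we have $|q_j-q_i|=|q_j|-|q_i|\ge 2^{n_0}|w|(4^j-4^{j-1})\ge 3R$. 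Thus no ball of radius $\epsilon R=R$ contains two of the $q_j$, so $\{x:R\le|x|\le cR,\ |L(x)|\le 1\}$ cannot be covered by $N$ balls of radius $\epsilon R$. Since $R$ may be taken arbitrarily large and $N$ was arbitrary, $L$ does not have the pits effect.

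The main obstacle, as I see it, sits in the second paragraph: the claim that $x_0$ is not an exceptional value of $f$ (so that $L^{-1}(x_0)$ is strictly larger than $\{0\}$), together with the supporting fact — which needs to be argued carefully rather than waved through — that a finite completely invariant set cannot contain a repelling fixed point, because its points are totally ramified while $i(x_0,f)=1$. The geometric last step is deliberately soft: because $c$ is allowed to depend on $N$, one ray meeting every dyadic scale already furnishes enough well--separated points, so there is no need to control how $L^{-1}(x_0)$ distributes itself across $J(f)$.
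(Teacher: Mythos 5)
Your proof is correct, but it takes a genuinely different route from the paper's. The paper's argument is geometric: it invokes the $\alpha$-uniform perfectness of $J(f)$ (Theorem \ref{unifperf}), chooses $D$ so that $L$ maps annuli $A(2^{-j}r,2^{-j}Dr)\subset U$ to ring domains of modulus exceeding $\alpha$, concludes these must meet $J(f)$, and then pushes the resulting preimages forward by $T^j$ to obtain a geometric sequence $x_k$ with $|L(x_k)|\le\max_{x\in J(f)}|x|$. You instead exploit the functional equation directly: since $i(x_0,f)=1$ and $d\ge 2$, the fibre $f^{-1}(x_0)$ has a point $x_1\ne x_0$, and you argue (via the fiber formula $\sum_{x\in f^{-1}(y)}i(x,f)=d$, which for a proper map forces a finite backward-invariant set to consist of points of local index $d$) that $x_1$ may be taken non-exceptional; Theorem \ref{omitted} then gives $w\ne 0$ with $L(w)=x_1$, and iteration yields $L(2^nw)=x_0$ for all $n\ge 1$, a full dyadic ray of $L$-preimages of $x_0$. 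Your approach is more concrete and dynamical — it produces an explicit invariant ray and hinges on Theorem \ref{omitted}, which the paper has already proved at this point, so the logical order is sound — while the paper's approach is more geometric and does not need any discussion of exceptional points or the fiber formula. One small remark: the definition of the pits effect uses the bound $|L(x)|\le 1$, and your normalisation $x_0=0$ makes $|L(2^nw)|=0$; without that normalisation you would get $|L(2^nw)|=|x_0|$, which is why the paper explicitly appeals to the remark after Definition 1.2 in \cite{BN} that any constant $C$ in place of $1$ suffices. You should make that appeal explicit too, or at least justify that translating the target by $x_0$ does not affect the pits effect. The separation estimate in your last paragraph is correct: consecutive points $q_j$ on the ray are at distance at least $3R$ and so cannot share a ball of radius $R$, giving the required failure of covering by $N$ balls. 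Overall this is a valid alternative proof.
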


\begin{proof}
As noted after Definition 1.2 in \cite{BN}, it suffices to show that there is a sequence $(x_k)_{k=1}^{\infty}$ tending to $\infty$ such that $\limsup_{k\to \infty} |x_{k+1}|/|x_k|  < \infty$ and 
\begin{equation}
\label{juliaeq1}
|L(x_k)| \leq C 
\end{equation}
for all $k\in \N$ and some positive constant $C$.

Recall from Theorem \ref{unifperf}
that the Julia set of a uqr mapping $f$ is $\alpha$-uniformly perfect, that is, if $E$ is any ring domain in $F(f)$ separating points of $J(f)$ then the conformal modulus of $E$ is at most $\alpha$ for some $\alpha$ depending on $f$.

If $L$ is $K$-quasiregular, then suppose that $D>0$ is chosen so that $\frac{1}{2K} \log D > \alpha$. 
Next, since $0$ is not a branch point of $L$, there exists a neighbourhood $U$ of $0$ 
such that $L \av_{U}$ is injective and we may assume that 
$L(U) \cap J(f) \neq J(f)$.

Consider the annulus $A=A(r,Dr)$, and choose $j \in \N$ large enough such that $T^{-j}(A) = A(2^{-j}r, 2^{-j}Dr) \subset U$. Then $L(T^{-j}(A))$ is a ring domain separating points of $J(f)$ with modulus greater than $\alpha$ and hence must intersect $J(f)$. Therefore, with $C = \max _{x \in J(f)} |x|$, there exists $y \in L(U) \cap J(f)$ which satisfies
\[ |f^j(y)| \leq C,\]
for all $j \in \N$. There exists $y' \in U$ such that $L(y') = y$ and $T^j(y') \in A$. Hence $T^j(y')$ is a point in $A(r,Dr)$ with $|L(T^j(y'))| = |f^j(L(y'))| \leq C$.

This argument shows that we can find a sequence of points $x_k \in A(D^{k-1}r,D^kr)$ for $k\in \N$ which satisfy \eqref{juliaeq1} with this $C$ and such that $|x_{k+1}|/|x_k| \leq D^2$. This proves the lemma.
\end{proof}

For part $(vi)$, it is shown in \cite{BFN} that for a quasiregular mapping of transcendental type of positive lower order, the Julia set and the boundary of the fast escaping set agree. Hence part $(vi)$ follows from Theorem \ref{growth}.

\section{Density of repelling periodic points}

\begin{proof}[Proof of Theorem \ref{dense}]

By a result contained in Siebert's thesis \cite{Siebert}, and see also \cite[Theorem 4.1]{B2} and the discussion preceding it, the periodic points are dense in the Julia set of a uniformly quasiregular mapping. Suppose that $f:\R^m \to \R^m$ is a uqr mapping of polynomial type and that $J(f)$ is a tame Cantor set. Let $x_0$ be a periodic point of period $p$, and write $F=f^p$ so that $x_0$ is a fixed point of $F$.

Let $U_{\delta}$ be a neighbourhood of $x_0$ such that the diameter of $U_{\delta}$ is at most $\delta$ and $\partial U_{\delta} \subset I(f)$. That such neighbourhoods exist follows from the tameness of $J(f)$ and the fact that every bounded component of $F(f)$ is simply connected since $J(f) = \partial I(f)$.

Let $\delta_n \to 0$ be such that $\overline{U_{\delta_{n+1}}} \subset U_{\delta_n}$. We must have that $F$ is injective on a neighbourhood of $x_0$ since otherwise $x_0$ is an attracting fixed point by Theorem \ref{rick2} (see also \cite[equation (7)]{HMM}). This is impossible since there are escaping points arbitrarily close to $x_0$. Find $\epsilon >0$ so that $F$ is injective on $B(x_0,\epsilon)$. We may assume $\delta_n <\epsilon$ for all $n$.

Choose $n$ large enough so that $U_{\delta_n}$ is a very small neighbourhood of $x_0$ and so that there exists $k\in \N$ with $\overline{U_{\delta_n}} \subset F^k(U_{\delta_n}) \subset B(x_0,\epsilon)$. That we can find such a $k$ follows since $\partial U_{\delta_n} \subset I(F)$ (recall $I(F)=I(f)$) and as long as $F^j(U_{\delta_n}) \subset B(x_0,\epsilon)$, the mapping $F^j |_{U_{\delta_n}}$ is $K$-quasiconformal, and then $\max _{x\in \delta_n} |F^j(x)| / \min_{x\in \delta_n} |F^j(x)| <C$ for some $C$ depending on $K$.

Hence by the topological definition of fixed points, see \cite[p.90]{HMM}, $x_0$ is a repelling fixed point of $F^k$. Finally, by \cite[Proposition 4.6]{HMM}, $x_0$ is a repelling periodic point of $f$.

\end{proof}

\section{The fast escaping set of $L$}

Suppose that $f:\R^m \to \R^m$ is a $K$-uqr mapping of polynomial type, that $J(f)$ is a tame Cantor set and that $x_0$ is a repelling periodic point guaranteed by Theorem \ref{dense}. We may assume that the degree $d$ of $f$ is greater than $K$, otherwise consider an iterate $f^n$ of $f$ so that $nd >K$. Note that $J(f^n) = J(f)$ by \cite[Corollary 3.3]{HMM}.
Let $L$ be a linearizer of $f$ at $x_0$, conjugating $f$ to $T(x) =2x$. Throughout this section, we will write
\[ \beta = \left ( \frac{d}{K} \right ) ^{1/(m-1)}.\]

\subsection{H\"{o}lder continuity and growth estimates}

We next use the H\"{o}lder estimate for the iterates of $f$ from Lemma \ref{holderinf} to obtain growth estimates for the linearizer $L$.

\begin{lemma}
\label{reggrowth}
There exists $R_1>0$ such that
\[ \prod _{i=1}^{n-1} \left ( \left ( \frac{d}{K} \right ) ^{1/(m-1)} + \frac{ \log C_1}{\log M(2^ir,L) } \right ) \leq
\frac { \log M(2^nr,L)}{\log M(r,L) } 
\leq \prod _{i=1}^{n-1} \left (  (dK ) ^{1/(m-1)} + \frac{ \log C_2}{\log M(2^ir,L) } \right ),\]
for $r>R_1$, where the constants $C_1,C_2$ are from Lemma \ref{holderinf}.
\end{lemma}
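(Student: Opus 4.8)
The plan is to apply the iterated Hölder estimate from Lemma~\ref{holderinf} to $h = f$, evaluated at the point $L(x)$ where $|x| = r$, and then translate through the functional equation $f^n \circ L = L \circ T^n$. Concretely, taking $|x| = r$ large enough that $|L(x)|$ can be made to exceed the constant $R_0$ of Lemma~\ref{holderinf} (this determines $R_1$, using that $M(r,L) \to \infty$), we have $|f^n(L(x))| = |L(2^n x)|$, so maximizing over $|x| = r$ gives
\[ C_1^{\,q_n(\beta)} M(r,L)^{\beta^n} \leq M(2^n r, L) \leq C_2^{\,q_n((dK)^{1/(m-1)})} M(r,L)^{(dK)^{n/(m-1)}}, \]
where I must be slightly careful that $\max_{|x|=r} |L(2^n x)| = M(2^n r, L)$ and that the lower Hölder bound, which a priori holds pointwise, still yields the stated lower bound for the maximum (choosing $x$ to realize $M(r,L)$ and pushing it forward). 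Taking logarithms twice converts the exponents $\beta^n$ and $(dK)^{n/(m-1)}$ and the factors $q_n(\cdot)$ into the telescoping structure; the cleanest route, though, is not to use the closed-form estimate of Lemma~\ref{holderinf} but rather its one-step version (the inequality $C_1 |y|^\beta \le |h(y)| \le C_2 |y|^{(dK)^{1/(m-1)}}$ for $|y| > R_0$, established inside that proof).

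Using the one-step estimate iteratively is what produces the product form directly: apply it with $y$ ranging over the image $L(T^i\{|x|=r\})$ for each $i$ from $1$ to $n-1$, so that $\log M(2^{i+1} r, L)$ is squeezed between $\log C_1 + \beta \log M(2^i r, L)$ and $\log C_2 + (dK)^{1/(m-1)} \log M(2^i r, L)$. Dividing through by $\log M(2^i r, L)$ (positive once $r > R_1$, since then $M(2^i r, L) \ge M(r, L) > 1$) gives
\[ \beta + \frac{\log C_1}{\log M(2^i r, L)} \leq \frac{\log M(2^{i+1} r, L)}{\log M(2^i r, L)} \leq (dK)^{1/(m-1)} + \frac{\log C_2}{\log M(2^i r, L)}, \]
and multiplying these inequalities over $i = 1, \dots, n-1$ telescopes the middle to $\log M(2^n r, L)/\log M(r, L)$, which is exactly the claimed bound.

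The only genuinely delicate points — and where I expect to spend the most care — are (i) ensuring $|L(2^i x)| > R_0$ uniformly, i.e.\ choosing $R_1$ so that $M(r,L) > R_0$ forces $M(2^i r, L) > R_0$ for all $i \ge 0$ and, more importantly, so that the \emph{minimum} of $|L|$ on the relevant spheres also exceeds $R_0$ where the Hölder lower bound is invoked; here one uses that $L$ is of polynomial-type growth only at $0$ but actually of transcendental type at $\infty$, so some mild argument (or simply restricting attention to the point realizing the maximum modulus and its forward $T$-orbit, all of which have modulus tending to infinity) is needed; and (ii) checking that $C_1, C_2$ can be taken with $\log C_1$ of whatever sign without breaking the division step — this is fine because $\log M(2^i r, L) > 0$. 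I would also remark that the constants $C_1, C_2$ here are literally those of Lemma~\ref{holderinf} applied to $h = f$, so no new constants are introduced. None of this requires the tameness of $J(f)$ or the choice of $x_0$ from Theorem~\ref{dense}; the lemma is purely about the growth of $L$ and follows from Lemma~\ref{holderinf} together with the functional equation.
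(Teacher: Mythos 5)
Your proposal is correct and takes essentially the same route as the paper: reduce to the one-step H\"older estimate from (the proof of) Lemma~\ref{holderinf}, apply it at the point on $S_{2^ir}$ realizing the maximum modulus (pushing it forward via $L\circ T = f\circ L$), divide by $\log M(2^ir,L)$, and telescope. The paper handles the upper bound slightly more cleanly by writing $M(2r,L)=M(L(S_r),f)\le M(M(r,L),f)$ and invoking the H\"older estimate once, rather than tracking a pointwise estimate over the whole image sphere, but this is the same argument in substance, and your identification of the delicate point (the pointwise lower bound must be applied at the maximizing point, whose modulus tends to infinity) is exactly the observation the paper uses.
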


\begin{proof}

Denote by $S_r = \{ x\in \R^m : |x| = r \}$ the sphere of radius $r$ centred at $0$. Let $r$ be large and $y \in S_r$ such that $|L(y)| \geq |L(x)|$ for all $x \in S_r$. Let $w = L(y)$ so that $|w| = M(r,L)$. Then by the functional equation for $L$ and  Lemma \ref{holderinf},
\begin{align*}
\log M(2r,L) & = \log M(r,f \circ L) \\
&= \log M( L(S_r),f )\\
& \geq \log |f(w)| \\
& \geq \log C_1 +(d/K)^{1/(m-1)} \log |w| \\
&= \log C_1 + (d/K)^{1/(m-1)} \log M(r,L).
\end{align*}
Also,
\begin{align*}
\log M(2r,L) & \leq \log M(M(r,L),f) \\
& \leq \log C_2 + (dK)^{1/(m-1)} \log M(r,L),
\end{align*}
and so we have
\[ \left( \frac{d}{K} \right ) ^{1/(m-1)} + \frac{\log C_1}{\log M(r,L)} \leq \frac{ \log M(2r,L)}{\log M(r,L) } \leq 
(dK)^{1/(m-1)}+ \frac{\log C_2}{\log M(r,L)}.\]
By induction, we have the result.
\end{proof}

\begin{lemma}
\label{reggrowth2}
Let $\mu >1$. There exist $R_2>0$ and $N\in \N$ such that for all $R>R_2$, the sequence defined by
\begin{equation}\label{eq:reg1} 
r_n = 2^n M^n(R,L)
\end{equation}
satisfies
\[ M(r_n,L) > r_{n+1}^{\mu},\]
for $n\geq N$.
\end{lemma}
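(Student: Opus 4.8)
The plan is to bound $M(r_n,L)$ from below by feeding the iterated functional equation $f^n\circ L=L\circ T^n$ into the growth estimate of Lemma \ref{holderinf}, and then to compare the result with $r_{n+1}^\mu$, exploiting that $\beta=(d/K)^{1/(m-1)}>1$ (recall $d>K$ throughout this section).

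Write $s_n=M^n(R,L)$, so that $r_n=2^n s_n$ and $s_{n+1}=M^{n+1}(R,L)=M(s_n,L)$. Since $L$ is of transcendental type, $M(r,L)/r\to\infty$; fix $R_2$ large enough that $M(r,L)>r$ for $r\ge R_2$, that $R_2\ge R_0$ (the constant of Lemma \ref{holderinf}), and that $\log R_2\ge 2|\log C_1|/(\beta-1)$. Then for every $R>R_2$ the sequence $(s_n)$ is strictly increasing with $s_n>R_2$ for all $n$; in particular $\log s_{n+1}>\log R_2>0$, a lower bound independent of $R$. Extracting this $R$-independent bound at the outset is the crucial point, since it is what will let us choose $N$ uniformly in $R$.

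Now fix $n$ and choose $y$ with $|y|=s_n$ and $|L(y)|=M(s_n,L)=s_{n+1}$. Using $L\circ T^n=f^n\circ L$,
\[ M(r_n,L)=M(2^n s_n,L)=\max_{|x|=s_n}|f^n(L(x))|\ \ge\ |f^n(L(y))|\ \ge\ C_1^{\,q_n(\beta)}\,s_{n+1}^{\,\beta^n}, \]
the last step by Lemma \ref{holderinf} applied to $f^n$ at the point $L(y)$, which is legitimate because $|L(y)|=s_{n+1}>R_2\ge R_0$. Taking logarithms, using $q_n(\beta)=(\beta^n-1)/(\beta-1)\le\beta^n/(\beta-1)$ and $\log s_{n+1}>\log R_2\ge 2|\log C_1|/(\beta-1)$, we obtain $\log M(r_n,L)\ge\tfrac12\beta^n\log s_{n+1}$. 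Since $\log r_{n+1}^\mu=\mu(n+1)\log 2+\mu\log s_{n+1}$, the inequality $M(r_n,L)>r_{n+1}^\mu$ reduces to $(\tfrac12\beta^n-\mu)\log s_{n+1}>\mu(n+1)\log 2$, and because $\log s_{n+1}>\log R_2$ it suffices that $\tfrac14\beta^n\log R_2>\mu(n+1)\log 2$. As $\beta>1$, the left side grows exponentially and the right side only linearly, so this holds for all $n\ge N$ with $N=N(\beta,\mu,R_2)$ independent of $R$, which is exactly the claim.

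The only genuine subtlety here is the uniformity of $N$ over $R>R_2$; everything else is a routine logarithmic estimate, handled by the $R$-independent bound $s_{n+1}>R_2$. As an alternative to invoking Lemma \ref{holderinf} directly, one could apply Lemma \ref{reggrowth} with $r=s_n$, which yields $\log M(r_n,L)\ge(\beta')^{n-1}\log s_{n+1}$ for any fixed $\beta'\in(1,\beta)$ once $R_2$ is large, after which the same exponential-versus-linear comparison finishes the proof.
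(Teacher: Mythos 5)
Your proof is correct. The engine is the same as the paper's --- the functional equation $f^n\circ L = L\circ T^n$ combined with the H\"older growth estimate of Lemma \ref{holderinf} --- but you apply Lemma \ref{holderinf} in one shot to $f^n$ at the point where $M(s_n,L)$ is attained, rather than routing through the iterated estimate of Lemma \ref{reggrowth} as the paper does (which is itself proved by inducting on exactly this kind of single step). The net lower bound $\log M(r_n,L)\gtrsim\beta^n\log s_{n+1}$ is the same in both cases, so this is a cosmetic shortcut rather than a different argument. One small virtue of your write-up is that it makes the uniformity of $N$ over $R>R_2$ explicit: you isolate the $R$-independent lower bound $\log s_{n+1}>\log R_2$ before doing the exponential-vs-linear comparison, whereas the paper closes with ``choose $R$ large enough and $n$ large enough so that $\beta^n>\mu$'' without spelling out why the resulting $N$ does not depend on $R$. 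Your treatment also sidesteps the slight sign sloppiness in the paper's step replacing $\log C_1/\log M(2^i M^n(R,L),L)$ by $-\log C_1/\log R$, by simply absorbing $|\log C_1|/(\beta-1)$ into half of $\log R_2$.
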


\begin{proof}
Assume $R$ is large.
With the sequence $r_n$ defined by \eqref{eq:reg1}, applying Lemma \ref{reggrowth} with $r = M^n(R,L)$ yields
\begin{align*}
\log M(r_n,L) & = \log M(2^nM^n(R,L),L) \\
& \geq \prod _{i=0}^{n-1} \left ( \left ( \frac{d}{K} \right )^{1/(m-1)} + \frac {\log C_1 }{\log M(2^i M^n (R,L) , L) } \right ) \cdot \log M( M^n(R,L) , L) \\
&\geq \left ( \beta - \frac{ \log C_1}{\log R} \right ) ^n \log M^{n+1}(R,L).
\end{align*}
Now,
\begin{align*}
\log r_{n+1}^{\mu} & = \mu \log ( 2^{n+1} M^{n+1}(R,L) ) \\
&= \mu (n+1) \log 2 + \mu \log M^{n+1}(R,L).
\end{align*}
Hence the result is true if
\[ \log M^{n+1}(R,L) \left (  \left ( \beta - \frac{\log C_1}{\log R} \right )^n - \mu \right ) > \mu (n+1) \log 2.\]
This is so if we choose $R$ large enough and $n$ large enough so that $\beta^n > \mu$.
\end{proof}

We next prove a lemma on the growth of the minimum modulus of $L$.

\begin{lemma}
\label{minmodgrowth}
Suppose that $f:\R^m \to \R^m$ is $K$-uqr of degree $d>K$, $J(f)$ is a tame Cantor set, $x_0$ is a repelling fixed point and $L$ is the corresponding linearizer. 
Let 
\[ \mu > \frac{ \log d + \log K}{\log d- \log K}.\]
There exists $R_3>0$ such that for $r>R_3$ there is a continuum $\Gamma^r$ separating $S_r$ and $S_{r^{\mu}}$ such that
\[ m(\Gamma^r,L) > M(r,L).\]
\end{lemma}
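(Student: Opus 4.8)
\textbf{Proof proposal for Lemma \ref{minmodgrowth}.}

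The plan is to produce the separating continuum $\Gamma^r$ by pulling back, via $T^{-j}$, an annulus in the dynamical plane of $f$ and pushing it forward by $L$, exactly as in the proof of Lemma \ref{julialemma}, but this time tracking the \emph{size} of the image rather than just the fact that it meets $J(f)$. First I would fix $r$ large and consider the round annulus $A = A(r^{1/\mu}, r)$; its conformal modulus is $\frac{1}{m-1}\log r^{1-1/\mu}\to\infty$ as $r\to\infty$. Choose $j=j(r)\in\N$ minimal so that $T^{-j}(A) = A(2^{-j}r^{1/\mu}, 2^{-j}r)$ lies inside the neighbourhood $U$ of $0$ on which $L$ is injective (so $j \asymp \log r$). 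Then $L(T^{-j}(A))$ is a genuine ring domain — by injectivity of $L|_U$ and quasiconformality on $U$ its modulus is still $\gg\alpha$ — and the bounded complementary component of $L(T^{-j}(A))$ contains $x_0\in J(f)$, which is not escaping. Applying $f^j$ and using $f^j\circ L = L\circ T^j$, I get that the continuum $\Gamma^r := f^j\big(\partial_{\mathrm{out}} L(T^{-j}(A))\big) = L\big(T^j(\partial_{\mathrm{out}} T^{-j}(A))\big)$ is the image under $L$ of (a continuum lying on) the outer boundary sphere $S_r$, hence separates $0$ from $\infty$; I then need to check it separates $S_r$ from $S_{r^{\mu}}$ in the sense required.

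The core estimate is the lower bound $m(\Gamma^r, L) > M(r,L)$. Every point $w$ of the bounded complementary component of $L(T^{-j}(A))$ stays bounded under forward iteration of $f$ (it is separated from $\infty$ by a ring domain in $F(f)$, or lies in the Cantor set $J(f)$), so $|f^i(w)|$ is bounded by a constant $C_0=C_0(f)$ for all $i$; in particular the inner boundary sphere of $L(T^{-j}(A))$, namely $L(S_{2^{-j}r^{1/\mu}})$, has all its iterates bounded. Unwinding: a point $z$ with $|z| = r^{1/\mu}$ of the form $z = T^j(z')$, $|z'| = 2^{-j}r^{1/\mu}$, satisfies $|L(z)| = |f^j(L(z'))| \le C_0$ — wait, that is too strong; what I actually want is the reverse. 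Let me instead reconsider: I want points of $\Gamma^r$ (which lie at modulus $r$) to have \emph{large} image. So $\Gamma^r$ should be the $L$-image of the \emph{outer} sphere $S_r$, and I use that, since $\Gamma^r$ must also \emph{separate} $J(f)$ (its modulus from the pulled-back picture exceeds $\alpha$ only after we argue the image cannot be contained in $F(f)$)… The cleaner route: take $\Gamma^r$ to be a component of $L(S_r)$ that forms a ring with $J(f)\cup\{x_0\}$ inside; since $L$ maps $U$ homeomorphically and $T^{-j}A\subset U$, the image $L(S_r\cap \text{stuff})$ — I would instead directly define $\Gamma^r$ as the outer boundary curve of the ring $L(T^{-j}A)$ mapped forward by $T^j$ inside the domain, i.e. $\Gamma^r = L(S_r)$ restricted appropriately, and then for \emph{every} $x\in \Gamma^r$ write $x = L(y)$ with $|y|=r$; but I need a lower bound on $|L(y)|$, which is where Lemma \ref{holderinf} applied to $f^j$ enters: $|L(y)| = |f^j(L(y/2^j))| \ge C_1^{q_j(\beta)}|L(y/2^j)|^{\beta^j}$, and $|L(y/2^j)| = |L(z')|$ with $|z'|=2^{-j}r$, which I bound below by the behaviour of $L$ near $0$ (it is injective there, so $|L(z')|\gtrsim |z'|^{\nu}$ by Theorem \ref{rick2}). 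Combining with $2^j\asymp r^{?}$ and the choice of $\mu > (\log d+\log K)/(\log d - \log K)$ should give $|L(y)| > M(r,L)$; the exponent bookkeeping — comparing $\beta^j \cdot (2^{-j}r)^{\nu}$-type quantities against the upper bound for $M(r,L)$ from Lemma \ref{reggrowth} — is the quantitative heart and is exactly why the stated lower bound on $\mu$ appears.

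The main obstacle I anticipate is precisely this last bookkeeping step: reconciling (a) the \emph{upper} bound on how fast $M(r,L)$ can grow (so that $M(r,L)$ on the right-hand side is not too big), coming from the $(dK)^{1/(m-1)}$ side of Lemma \ref{holderinf}/Lemma \ref{reggrowth}, with (b) the \emph{lower} bound on $|L(y)|$ for $|y| = r$ that one extracts by writing $y = 2^j z'$ and iterating $f$ forward from the near-$0$ estimate for $L$, which uses the $(d/K)^{1/(m-1)}=\beta$ side. The ratio of the two competing exponents is governed by $(dK)/(d/K) = K^2$ relative to $d/K$, and forcing the $\beta^j$-growth to beat the $M(r,L)$-growth over a window of size $j \asymp \log r$ is exactly what demands $\mu$ exceed $(\log d+\log K)/(\log d-\log K)$. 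A secondary technical point is ensuring $\Gamma^r$ genuinely \emph{separates} $S_r$ from $S_{r^\mu}$ and is a continuum: this follows because $L(T^{-j}A)$ has modulus $>\alpha$ so by Theorem \ref{unifperf} it cannot lie in $F(f)$ and must wrap around part of $J(f)$, pinning its location, and then the minimality of $j$ controls how the image sits relative to the spheres $S_r$ and $S_{r^\mu}$; I would make this precise using the uniform perfectness constant $\alpha$ together with the distortion bound $K$ on $L|_U$, choosing $R_3$ so large that $j(r)$ is large enough for all the asymptotic ``$O(1)$'' terms to be absorbed.
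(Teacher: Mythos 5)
Your proposal does not correctly reproduce the key geometric construction, and the gap is not just in the ``bookkeeping'' you defer to the end; it is in the choice of the continuum $\Gamma^r$ itself.

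The central issue is that you cannot take $\Gamma^r$ to be (a portion of) a sphere $S_r$ or the $L$-image of one and then try to lower-bound $|L(y)|$ for $|y|=r$ by writing $|L(y)| = |f^j(L(y/2^j))|$. For many $y\in S_r$, the point $L(y/2^j)$ lies on or near $J(f)\cup\{x_0\}$, and for such points the forward $f$-orbit stays bounded, so $|f^j(L(y/2^j))|$ never gets large. Indeed Lemma~\ref{julialemma} is proved by exactly this observation: there is a sequence of radii $|x_k|\to\infty$ with $|L(x_k)|\le C$, because $L$ carries a shrinking annulus around $0$ onto a ring meeting $J(f)$, whose points do not escape. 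Thus $m(S_r,L)$ can never satisfy the lower bound you want; your estimate $|L(y/2^j)|\gtrsim|z'|^{\nu}$ followed by applying Lemma~\ref{holderinf} to $f^j$ is illegitimate because Lemma~\ref{holderinf} requires the starting point to be outside the ball $\{|x|\le R_0\}$, and points in the bounded complementary component of your ring (which contains $J(f)$) need not and will not leave a compact set.

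What is missing is the role of tameness. The paper uses tameness of $J(f)$ to produce a topologically convex neighbourhood $V\ni x_0$ whose boundary $\gamma_\delta := \partial V$ lies entirely in $B(x_0,\delta)\cap I(f)$. Then $\Gamma_\delta := L^{-1}(\gamma_\delta)\cap U$ is a continuum separating $0$ from $\infty$, and one takes $\Gamma^r = 2^{\ell_2}\Gamma_\delta$ (with $\ell_2$ pinned down by $r^\mu$). Crucially, $L(\Gamma^r) = f^{\ell_2}(\gamma_\delta)$ is the forward iterate of a compact \emph{escaping} curve; after finitely many steps it lies in $\{|x|>R_0\}$, so Lemma~\ref{holderinf} applies legitimately and gives a uniform lower bound on $m(\Gamma^r,L)$. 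It is precisely because $\gamma_\delta\subset I(f)$ that the minimum modulus grows, and it is precisely the tame-Cantor hypothesis that guarantees such a $\gamma_\delta$ exists arbitrarily close to $x_0$. Your argument invokes uniform perfectness and Lemma~\ref{julialemma} as the model, but that machinery is tailored to show $L$ is small somewhere; for the large-minimum-modulus direction needed here, the escaping curve is indispensable, and you never construct it. The exponent analysis you sketch at the end (balancing $(dK)^{1/(m-1)}$ against $(d/K)^{1/(m-1)}$, yielding the $\mu$ threshold) is in the right spirit and matches the paper, but without the correct $\Gamma^r$ there is nothing to hang it on.
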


\begin{proof}
There exists a neighbourhood $U$ of $0$ such that $L |_{U}$ is injective. Let $\delta >0 $ be small enough that $B(x_0,\delta) \subset L(U)$. Since $J(f)$ is a tame Cantor set, there exists a topologically convex neighbourhood $V$ of $x_0$ such that $\gamma_\delta := \partial V \subset B(x_0,\delta) \cap I(f)$.
Let $\Gamma _{\delta} = L^{-1}(\gamma_{\delta}) \cap U$. Then $\Gamma_{\delta}$ is a continuum which separates $0$ from infinity.

Suppose that $\Gamma_{\delta} \subset A(s,t)$ and without loss of generality, we may assume that $t<1$. Let $r$ be large and find $\ell_1,\ell_2 \in \N$ such that
\[ 2^{\ell_1 -1} \leq r < 2^{\ell_1},\]
and 
\[ t \cdot 2^{\ell_2} \leq r^{\mu}  < t \cdot 2^{\ell_2+1}.\]
This pair of inequalities implies that
\begin{equation}
\label{minmodeq0} 
\mu \ell_1 - D_1 < \ell_2 < \mu \ell_1 -D_2,
\end{equation}
where $D_1 = \log t / \log 2 + \mu +1$ and $D_2 = \log t / \log 2$.

Next, since $\gamma_{\delta} \subset I(f)$ is compact, find $j \in \N$ minimal such that $f^{j}(\gamma_{\delta}) \subset \{ |x| > R_0\}$ and so we can apply Lemma \ref{holderinf}. Define $\Gamma^r = \{ x \in \R^m : 2^{-\ell_2}\cdot x \in \Gamma_{\delta} \}$. Then $\Gamma^r$ separates $S_r$ and $S_{r^{\mu}}$.

We first estimate the minimum modulus on $\Gamma^r$:
\begin{align*}
\log m(\Gamma^r,L) & = \log m( \Gamma^r, f^{\ell_2} \circ L \circ T^{-\ell_2} ) \\
& = \log m( \Gamma_{\delta}, f^{\ell_2} \circ L ) \\
& = \log m( \gamma_{\delta}, f^{\ell_2} ) \\
& \geq \log m(R_0, f^{\ell_2 - j} ) \\
& \geq q_{\ell_2 - j} ( (d/K)^{1/(m-1)} ) \log C_1 + (d/K)^{(\ell_2 - j)/(m-1)} \log R_0.
\end{align*}
Next,
\begin{align*}
\log M(r,L) & = \log M(L( S_{2^{-\ell_1}r} ), f^{\ell_1} ) \\
& \leq \log M( R_0, f^{\ell_1} ) \\
& \leq q_{\ell_1}( (dK)^{1/(m-1)} ) \log C_2 + (dK)^{\ell_1 / (m-1)} \log R_0 .
\end{align*}
Since $y^{j-1} \leq q_j(y) \leq y^{j}$, we obtain
\begin{equation}
\label{minmodeq1}
\log m(\Gamma^r, L) \geq \left ( \frac{d}{K} \right ) ^{(\ell_2 - j -1)/(m-1) } \log C_1 + \left ( \frac{d}{K} \right ) ^{(\ell_2 - j)/(m-1) } \log R_0
\end{equation}
and 
\begin{equation}
\label{minmodeq2}
\log M(r,L) \leq (dK)^{\ell_1 / (m-1) } \log (C_2R_0).
\end{equation}
Using \eqref{minmodeq0} and \eqref{minmodeq1}, we obtain
\begin{equation}
\label{minmodeq3}
\log m(\Gamma^r, L) \geq \left ( \frac{d}{K} \right ) ^{ ( \mu\ell_1 - D_1 - j -1)/(m-1)} \log C_1 + \left ( \frac{d}{K} \right ) ^{ (\mu \ell_1 - D_1 - j )/(m-1) } \log R_0.
\end{equation}
Recall that $\beta = (d/K)^{1/(m-1)} >1$.
Therefore, to obtain $\log m(\Gamma^r, L) \geq \log M(r,L)$, by \eqref{minmodeq2} and \eqref{minmodeq3} and rearranging, it suffices to show that
\[ \beta^{ \mu \ell_1 - D_1 - j - \ell_1 - 1}( \log C_1 + \beta \log R_0) > \beta^{\ell_1} K^{2\ell_1/(m-1)} \log (C_2R_0).\]
Recalling that $\ell_1$ depends on $r$ and writing $C$ for a constant which does not depend on $r$, by taking logarithms this can be written as
\[ \ell_1 \left ( (\mu -1 )\log \beta - \frac{2 \log K}{m-1} \right ) \geq C.\]
This is satisfied for large enough $r$, that is for large enough $\ell_1$, if 
\[ (\mu-1) \log \beta > \frac{2 \log K}{m-1} ,\]
that is, if
\[ \mu > 1 + \frac{ 2\log K}{(m-1)\log \beta } = \frac{\log d + \log K}{\log d - \log K}.\]
\end{proof}

\subsection{$A(L)$ is a spider's web}

We will use the Lemma \ref{char} characterization of spider's webs.

Recalling Lemmas \ref{holderinf}, \ref{reggrowth}, \ref{reggrowth2} and \ref{minmodgrowth}, let $R>\max \{ R_0,R_1,R_2,R_3 \}$ and for $n \in \N$, let $r_n = 2^n M^{n}(R,L)$ and let $\mu > \frac{ \log d + \log K}{\log d- \log K}$. By Lemma \ref{minmodgrowth}, there is a continuum $\Gamma^{r_n}$ separating $S_{r_n}$ and $S_{r_{n}^{\mu}}$ such that
\[ m(\Gamma^{r_n}, L) >M(r_n,L).\]
We define $G_n$ to be the interior of $\Gamma^{r_n}$. Then by construction, every $G_n$ is a bounded topologically convex domain with
\[ G_n \supset \{ x \in \R^m : |x| <r_n \} \supset \{ x \in \R^m : |x| <M^n(R,L) \}.\]
Further, it follows from Lemma \ref{reggrowth2} that
\[ m(\partial G_n, L) = m(\Gamma^{r_n},L) > M(r_n,L) > r_{n+1}^{\mu} > \max _{x \in \partial G_{n+1} } |x|,\]
and hence $G_{n+1}$ is contained in a bounded component of $\R^m \setminus L(\partial G_n )$ and we have fulfilled the conditions of Lemma \ref{char} for $A(L)$ to be a spider's web.


\begin{thebibliography}{widest-label}



\bibitem{B2}
W. Bergweiler,
Iteration of quasiregular mappings, 
{\it Comput. Methods Funct. Theory}, {\bf 10} (2010), no. 2, 455-481.

\bibitem{B3}
W. Bergweiler,
Fatou-Julia theory for non-uniformly quasiregular maps, {\it Ergodic Theory Dynam. Systems},
{\bf 33} (2013), 1-23.

\bibitem{BDF}
W. Bergweiler, D. Drasin, A. Fletcher,
The fast escaping set for quasiregular mappings,
submitted.

\bibitem{BFLM}
W. Bergweiler, A. Fletcher, J. K. Langley, J. Meyer,
The escaping set of a quasiregular mapping,
{\it Proc. Amer. Math. Soc.}, {\bf 137} (2009) 641-651.

\bibitem{BFN}
W. Bergweiler, A. Fletcher, D. A. Nicks,
The Julia set and the fast escaping set of a quasiregular mapping,
in preparation.

\bibitem{BH}
W. Bergweiler, A. Hinkkanen, 
On semiconjugation of entire functions,
{\it Math. Proc. Cambridge Philos. Soc.}, {\bf 126} (1999), no. 3, 565-574. 

\bibitem{BN}
W. Bergweiler, D. A. Nicks,
Foundations for an iteration theory of entire quasiregular maps,
to appear in {\it Israel J. Math.}


\bibitem{E}
A. E. Eremenko, 
On the iteration of entire functions, 
{\it Dynamical systems and ergodic theory}, Banach Center Publications 23, Polish Scientific Publishers, Warsaw, 1989, 339-345.

\bibitem{FN} 
A. Fletcher, D. A. Nicks,
Quasiregular dynamics on the $n$-sphere,
{\it Ergodic Theory and Dynamical Systems}, {\bf 31} (2011), 23-31.

\bibitem{FN2} A. Fletcher, D. A. Nicks, Julia sets of uniformly quasiregular mappings are uniformly perfect, {\it Math. Proc. Cam. Phil. Soc.}, {\bf 151}, no.3 (2011), 541-550.

\bibitem{GMRV} V. Gutlyanskii, O. Martio, V. Ryazanov, M. Vuorinen,
Infinitesimal geometry of quasiregular mappings,
{\it Ann. Acad. Sci. Fenn.}, {\bf 25} (2000), 101-130.

\bibitem{HM} A. Hinkkanen, G. J. Martin, 
Attractors in quasiregular semigroups,
{\it XVIth Rolf Nevanlinna Colloquium (Joensuu, 1995)}, de Gruyter, Berlin (1996), 135-141.

\bibitem{HMM} A. Hinkkanen, G. J. Martin, V. Mayer, Local dynamics of uniformly quasiregular mappings, {\it Math. Scand.}, {\bf 95} (2004), no. 1, 80-100. 

\bibitem{HR} J. Heinonen, S. Rickman, Quasiregular maps $S^3\to S^3$ with wild branch sets, {\it Topology}, {\bf 37} (1998), no. 1, 1-24. 

\bibitem{IM} T. Iwaniec, G. J. Martin, Quasiregular semigroups, {\it Ann. Acad. Sci. Fenn.}, {\bf 21} (1996), 241-254.

\bibitem{MP} G. J. Martin, K. Peltonen, Stoilow factorization for quasiregular mappings in all dimensions, {\it Proc. Amer. Math. Soc.} {\bf 138} (2010), no. 1, 147-151. 


\bibitem{MBP} H. Mihaljevic-Brandt, J. Peter, Poincar\'{e} functions with spiders' webs,
{\it Proc. Amer. Math. Soc.} {\bf 140} (2012), 3193-3205. 

\bibitem{Milnor} J.Milnor,
{\it Dynamics in one complex variable}, Third edition, Annals of
Mathematics Studies, {\bf 160}, Princeton University Press, Princeton, NJ, 2006.

\bibitem{Rickman} S. Rickman, 
{\it Quasiregular mappings}, Ergebnisse der Mathematik und ihrer
Grenzgebiete 26, Springer, 1993.


\bibitem{RS}
P. Rippon, G. Stallard, 
Fast escaping points of entire functions,
{\it Proc. London Math. Soc.}, {\bf 105} (2012), 787-820.

\bibitem{RS2012}
P. Rippon, G. Stallard,
A sharp growth condition for a fast escaping spider's web,
{\it Adv. Math.}, {\bf 244} (2013), 337-353.

\bibitem{Siebert} H. Siebert, {\it Fixpunkte und normale Familien quasiregul\"{a}rer Abbildungen}, Dissertation, CAU Kiel, 2004.

\bibitem{Siebert2} H. Siebert, Fixed points and normal families of quasiregular mappings, 
{\it Journal d'Analyse Math\'{e}matique}, {\bf 98}, no. 1 (2006), 145-168 .

\bibitem{Valiron} G. Valiron, Fonctions Analytiques, Presses de l'Universit\'{e}, 1954.


\end{thebibliography}
\end{document}